\DeclareMathAlphabet{\mathpzc}{OT1}{pzc}{m}{it}
\newcommand{\down}{\downarrow}
\newcommand{\up}{\uparrow}
\newcommand{\sw}{\wt{\bigvee}}
\newcommand{\ui}{I}
\newcommand{\wt}{\widetilde}
\newcommand{\mci}{\mathcal{I}}
\newcommand{\mcj}{\mathcal{J}}
\newcommand{\scra}{\mathscr{A}}
\newcommand{\scrb}{\mathscr{B}}
\newcommand{\scrc}{\mathscr{C}}
\newcommand{\scrd}{\mathscr{D}}
\newcommand{\scre}{\mathscr{E}}
\newcommand{\scrf}{\mathscr{F}}
\newcommand{\scrp}{\mathscr{P}}
\newcommand{\scru}{\mathscr{U}}
\newcommand{\scrs}{\mathscr{S}}
\newcommand{\scrv}{\mathscr{V}}
\newcommand{\bba}{\mathbb{A}}
\newcommand{\bbe}{\mathbb{E}}
\newcommand{\bbh}{\mathbb{H}}
\newcommand{\bbn}{\mathbb{N}}
\newcommand{\bbz}{\mathbb{Z}}
\renewcommand{\dh}{\mathbf{dh}}
\newcommand{\uh}{\mathbf{uh}}
\newcommand{\ov}{\overline}
\newcommand{\x}{\overline x_0}
\newcommand{\im}{\text{Im}}
\newtheorem{theorem}{Theorem}[section]
\newtheorem{lemma}[theorem]{Lemma}
\newtheorem{proposition}[theorem]{Proposition}
\newtheorem{corollary}[theorem]{Corollary}
\theoremstyle{definition}\newtheorem{definition}[theorem]{Definition}
\newtheorem{example}[theorem]{Example}
\newtheorem{remark}[theorem]{Remark}
\begin{document}
\title[Fundamental groups of reduced suspensions]{Fundamental groups of reduced suspensions are locally free}

\author[J. Brazas]{Jeremy Brazas}
\address{West Chester University\\ Department of Mathematics\\
West Chester, PA 19383, USA}
\email{jbrazas@wcupa.edu}

\author[P. Gillespie]{Patrick Gillespie}
\address{University of Tennessee\\ Department of Mathematics\\
Knoxville, TN 37996, USA}
\email{pgilles5@vols.utk.edu}

\subjclass[2010]{Primary 55Q52 , 55Q35 ; Secondary 08A65  }
\keywords{reduced suspension, fundamental group, locally free group, infinite earring group, harmonic archipelago, sequentially $0$-connected}
\date{\today}

\begin{abstract}
In this paper, we analyze the fundamental group $\pi_1(\Sigma X,\x)$ of the reduced suspension $\Sigma X$ where $(X,x_0)$ is an arbitrary based Hausdorff space. We show that $\pi_1(\Sigma X,\x)$ is canonically isomorphic to a direct limit $\varinjlim_{A\in\scrp}\pi_1(\Sigma A,\x)$ where each group $\pi_1(\Sigma A,\x)$ is isomorphic to a finitely generated free group or the infinite earring group. A direct consequence of this characterization is that $\pi_1(\Sigma X,\x)$ is locally free for any Hausdorff space $X$. Additionally, we show that $\Sigma X$ is simply connected if and only if $X$ is sequentially $0$-connected at $x_0$.
\end{abstract}

\maketitle

\section{Introduction}

When a based space $(X,x_0)$ is well-pointed, i.e. the inclusion $\{x_0\}\to X$ is a cofibration, the reduced suspension $\Sigma X$ with canonical basepoint $\x$ is homotopy equivalent to the unreduced suspension $SX$ and it follows that $\pi_1(\Sigma X,\x)$ is free on the set of path components of $X$ not containing $x_0$. When $X$ is not well-pointed, the situation becomes more delicate but includes some of the most important examples in the algebraic topology of locally complicated spaces. For example:
\begin{enumerate}
\item If $\bbe_0=\{1,2,3,\dots\}\cup\{\infty\}$ is the one-point compactification of the natural numbers, then $\Sigma \bbe_0$ is homeomorphic to the usual ($1$-dimensional) infinite earring space $\bbe_1$ whose fundamental group is extensively studied and applied \cite{CChe,Edafreesigmaproducts}. Moreover, the iterated suspension $\Sigma^n\bbe_0$ is the $n$-dimensional earring \cite{EK00higher} sometimes called the Barratt-Milnor Spheres \cite{BarrattMilnor}.
\item If $X=\{(0,0)\}\cup\{(x,\sin(1/x))\mid 0<x<1\}$ is the topologist's sine curve, then $\Sigma X$ is the harmonic archipelago $\bbh\bba$, introduced in \cite{BS98}, and whose fundamental group has also been widely studied, c.f. \cite{CHMArchipelago,Corsontriplecone,Corsonpmods,HHarchip,Hojkauniversal}.
\item If $X=C\bbe_0=\bbe_0\times \ui/\bbe_0\times\{1\}$ is the unreduced cone over a convergent sequence (a converging fan of arcs) and the image of $(\infty,0)$ is taken to be the basepoint of $X$, this provides what is perhaps the most well-known example where $X$ is a path-connected compact metric space but $\pi_1(\Sigma X)$ is non-trivial (in fact, it is isomorphic to the archipelago group $\pi_1(\bbh\bba)$)
\end{enumerate}
The archipelago group $\pi_1(\bbh\bba)$ is readily shown to be a direct limit of sequence of earring groups \cite[Prop. 1]{Hojkauniversal}. Hence, it is natural to ask if fundamental groups of reduced suspensions can always be ``built out of" free groups and earring groups. In this paper, we answer this question in the affirmative by establishing methods that allow us to characterize the algebraic structure of $\pi_1(\Sigma X,\x)$ in substantial generality. Independently, the authors of \cite{CorsonHojkaRed} have studied fundamental groups of reduced suspensions, restricting their attention to $X$ which are first countable at $x_0$ and focusing on the establishment of a dichotomy that allows one to decide when the groups $\pi_1(\Sigma X,\x)$ contain a copy of the archipelago group $\pi_1(\bbh\bba)$. We find little overlap with their results and methods.

For our main result, we consider a based space $(X,x_0)$ and let $\scrp$ denote the set of all subsets $A\subseteq X$ containing $x_0$ for which (1) $A$ has countably many path components, (2) each path component of $A$ is a Peano continuum, and (3) the path components of $A$ cluster at $x_0$. When ordered by inclusion, $\scrp$ becomes a directed set (see \Cref{directed}). The main technical achievement of this paper is the following.

\begin{theorem}\label{mainthm}
Let $(X,x_0)$ be a based Hausdorff space. The inclusion maps $A\to X$, $A\in\scrp$ induce a canonical isomorphism $\psi:\varinjlim_{A\in\scrp} \pi_1(\Sigma A, \x)\to \pi_1(\Sigma X, \x)$.
\end{theorem}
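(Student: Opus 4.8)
Since $\scrp$ is directed (\Cref{directed}), $\varinjlim_{A\in\scrp}\pi_1(\Sigma A,\x)$ is an ordinary directed colimit, the inclusions $A\hookrightarrow X$ induce a cocone $\{\Sigma A\to\Sigma X\}$, and $\psi$ is the canonically induced homomorphism; it is an isomorphism as soon as it is surjective with trivial kernel. I would deduce both from a single \emph{compression principle}, which is where essentially all the content lies:

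\emph{(i)} every loop $\gamma$ in $\Sigma X$ at $\x$ is, after a homotopy rel $\x$, supported in $\Sigma A$ for some $A\in\scrp$; and \emph{(ii)} if a loop $\gamma$ in $\Sigma X$ at $\x$ has image in $\Sigma A_0$ for some $A_0\in\scrp$ and is null-homotopic in $\Sigma X$, then it is null-homotopic in $\Sigma B$ for some $B\in\scrp$ with $A_0\subseteq B$. Granting \emph{(i)}, $\psi$ is surjective. Granting \emph{(ii)}, if $[\gamma]\in\pi_1(\Sigma A_0,\x)$ lies in $\ker\psi$ then $\gamma$ is already null-homotopic in $\Sigma B$ for some $B\supseteq A_0$, so $[\gamma]$ represents $0$ in the colimit; hence $\ker\psi=0$. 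Part \emph{(ii)} reduces to the method behind \emph{(i)}: a null-homotopy is a map $H\colon[0,1]^2\to\Sigma X$ and $H([0,1]^2)$ is again a Peano continuum containing $\x$, so the same compression argument, applied to $H$ rel the part of $\partial[0,1]^2$ carrying $\gamma$ and $\x$, produces the required $B$ after one application of \Cref{directed}.

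For \emph{(i)}: as $X$, hence $\Sigma X$, is Hausdorff, $K:=\gamma(S^1)$ is a Peano continuum with $\x\in K$, and I would write the open set $K\setminus\{\x\}$ as the disjoint union of its components $W_n$ — countably many, each open in $K$ and, by local connectedness of $K$, path-connected. Each $W_n$ lies in $\Sigma X\setminus\{\x\}\cong(X\setminus\{x_0\})\times(0,1)$ and therefore projects into a single path component of $X\setminus\{x_0\}$. The obstacle is that the $X$-shadow of an excursion need not lie in any Peano continuum — for instance when $X$ is the topologist's sine curve and a single excursion traces out the whole curve, whose closure in $X$ is all of $X$ and fails to be locally connected at $x_0$. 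One therefore first deforms $\gamma$: exploiting that $\Sigma X$ still carries a suspension structure, the portion of each excursion lying in a small neighborhood of $\x$ can be pushed inward so that, after a based homotopy, each excursion factors through $\Sigma P_n$ for an honest Peano continuum $P_n\subseteq X$ with $x_0\in P_n$; and since $\gamma^{-1}(\x)$ is closed and $S^1$ compact, all but finitely many excursions lie inside any prescribed neighborhood of $\x$, which lets one arrange the $P_n$ to cluster at $x_0$. It then remains to check that $A:=\{x_0\}\cup\bigcup_nP_n$ belongs to $\scrp$: it has countably many path components, these cluster at $x_0$ by construction, and each is compact, connected and locally connected because the clustering forces the relevant union of the $P_n$ (locally finite away from $x_0$) to glue up correctly. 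The deformed loop is then supported in $\Sigma A$.

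The step I expect to be the crux — and which warrants the phrase ``main technical achievement'' — is the deformation just described together with the verification that $A\in\scrp$: one must tame the a priori wild shadows of the excursions into Peano continua, do so without losing control of their sizes so that clustering at $x_0$ survives, and assemble the pieces so that the components of $A$ are themselves Peano continua and not merely countable unions of such. The natural tools are the comparison of $\Sigma X$ with the unreduced suspension near the two poles and near $\x$, and Hahn--Mazurkiewicz-type exhaustions of (generalized) Peano continua by Peano subcontinua; their interaction with the non-well-pointedness of $(X,x_0)$ is the heart of the argument. Everything else — well-definedness of $\psi$, the formal passage from the compression principle to surjectivity and to $\ker\psi=0$, and the reduction of \emph{(ii)} to \emph{(i)} — is routine.
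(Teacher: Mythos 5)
Your formal skeleton is correct and matches the paper's: by \Cref{directed} it suffices to show (i) every based loop in $\Sigma X$ compresses, up to based homotopy, into $\Sigma A$ for some $A\in\scrp$, and (ii) a loop in $\Sigma A_1$ that is null-homotopic in $\Sigma X$ is already null-homotopic in $\Sigma A_2$ for some $A_2\in\scrp$ with $A_1\subseteq A_2$ (these are \Cref{standardform} and \Cref{inj}). However, both of your compression arguments have genuine gaps.

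For (i), the clustering step is a non sequitur. A basic neighborhood of $\x$ is $q(O(\scrv,\eta))$, which contains $q(X\times[0,\eta)\hbox{-type bands})$; hence an excursion can lie in an arbitrarily small neighborhood of $\x$ while its shadow in $X$ is all of $X$ (keep the suspension coordinate below $\eta$). So ``all but finitely many excursions are small in $\Sigma X$'' does not let you arrange the $P_n$ to cluster at $x_0$, and retaining the ``middle'' of each excursion as a Peano continuum while pushing only the part near $\x$ leaves shadows that need not cluster --- note also that points $(x,1/2)$ with $x$ near $x_0$ are near $\x$ but cannot be ``pushed inward'' in the suspension coordinate at all. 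The paper's mechanism is different and is exactly what supplies the clustering: each excursion is subdivided into subarcs lying in $X\times[0,2/3)$ or $X\times(1/3,1]$, every subarc is pushed all the way to a pole, adjacent partial adjoint paths cancel, and what survives is a concatenation of \emph{full} adjoint loops $\lambda_y^{\pm}$. Since $\lambda_y$ passes through the equator point $q(y,1/2)$, smallness of the surviving loop in $\Sigma X$ genuinely forces $y$ to lie near $x_0$. (A byproduct is that one only ever needs $A=C$ a countable set of points, so $\Sigma C$ is a wedge of circles or $\bbe_1$; your Peano continua $P_n$ are not needed for surjectivity.)

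For (ii), ``the same compression argument applied to $H$'' is not available, and this step is the paper's main technical effort rather than a routine reduction. The loop argument is intrinsically one-dimensional: one pushes each subarc to a single pole and cancels. A component of $I^2\setminus H^{-1}(\x)$ is an open planar set that in general meets both $q(X\times[0,\epsilon))$ and $q(X\times(1-\epsilon,1])$ and cannot be pushed to one pole; and the fact that $H(I^2)$ is a Peano continuum tells you nothing about its shadow in $X$, which is the whole problem. The paper's \Cref{inj} triangulates $I^2\setminus H^{-1}(\x)$, subdivides so that every $2$-simplex lies in $H^{-1}(q((X\setminus x_0)\times(0,2/3)))$ or $H^{-1}(q((X\setminus x_0)\times(1/3,1)))$, and pushes the two families of simplices down, respectively up, \emph{relative to the interface} $B$ between them. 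The new homotopy lands in $\Sigma(A_1\cup B')$ where $B'$ is the shadow of $H(B)$; the decisive point is that $H(B)\subseteq q(X\times(1/3,2/3))$, a band on which nearness to $\x$ really does control the first coordinate, so $B'$ is a countable union of Peano continua clustering at $x_0$ and $A_1\cup B'\in\scrp$ by \Cref{components-are-Peano}. Without some such two-dimensional construction, your deduction of injectivity does not go through.
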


We also show that each of the groups $\pi_1(\Sigma A, \x)$ is either isomorphic to a finitely generated free group or the infinite earring group $\pi_1(\bbe_1)$ (\Cref{loc-free}) and is therefore locally free. Since direct limits of locally free groups are locally free \cite[Lemma 24]{CHMArchipelago}, the following consequence is immediate.

\begin{theorem}\label{homology}
For every based Hausdorff space $(X,x_0)$, $\pi_1(\Sigma X,\x)$ is locally free.
\end{theorem}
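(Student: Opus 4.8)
The plan is to derive \Cref{homology} as a purely formal consequence of \Cref{mainthm} and \Cref{loc-free}, so that the only thing to assemble is a short chain of implications. Recall that a group is \emph{locally free} when each of its finitely generated subgroups is free, and that in a direct limit $\varinjlim_{i\in I}G_i$ over a directed poset every finitely generated subgroup lies in the image of a single $G_i$; hence a direct limit of locally free groups is again locally free. This last fact is exactly \cite[Lemma 24]{CHMArchipelago}, which I would cite rather than reprove.

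With that in hand the argument is immediate. By \Cref{mainthm}, the inclusions $A\to X$, $A\in\scrp$, induce an isomorphism $\pi_1(\Sigma X,\x)\cong\varinjlim_{A\in\scrp}\pi_1(\Sigma A,\x)$. By \Cref{loc-free}, each group $\pi_1(\Sigma A,\x)$ is locally free, being either a finitely generated free group (hence locally free, since subgroups of free groups are free by the Nielsen--Schreier theorem) or the infinite earring group $\pi_1(\bbe_1)$ (which is locally free). Applying \cite[Lemma 24]{CHMArchipelago} to the directed system $\{\pi_1(\Sigma A,\x)\}_{A\in\scrp}$ then shows that $\pi_1(\Sigma X,\x)$ is locally free, which is the assertion of \Cref{homology}.

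I do not expect any genuine obstacle in this deduction: all of the substance resides in \Cref{mainthm} and \Cref{loc-free}, which we are assuming. If one wanted the argument fully self-contained, the single nontrivial input beyond Nielsen--Schreier would be the local freeness of the earring group $\pi_1(\bbe_1)$; rather than redeveloping it here, I would quote it from the cited literature on $\pi_1(\bbe_1)$ (e.g.\ \cite{CChe,Edafreesigmaproducts}). So the ``proof'' of \Cref{homology} is really just the observation that local freeness is inherited by direct limits, combined with the two main results.
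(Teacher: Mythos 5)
Your proposal is correct and matches the paper's own deduction exactly: the paper likewise obtains \Cref{homology} immediately from \Cref{mainthm} and \Cref{loc-free} together with the fact that direct limits of locally free groups are locally free, citing \cite[Lemma 24]{CHMArchipelago}. Nothing further is needed.
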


It is certainly possible to extend the results of \cite{CorsonHojkaRed} using \Cref{mainthm}; however, we avoid doing so here. Rather, we briefly mention two other consequences of our methods. In \Cref{equiv}, we give a complete characterization of the spaces $X$ for which $\Sigma X$ is simply connected. Additionally, since abelianization is a left adjoint functor, it preserves direct limits. Thus $\psi$ descends to an isomorphism $\psi':\varinjlim_{A\in\scrp} H_1(\Sigma A)\to H_1(\Sigma X)$ on singular homology. It is known that $H_1(\bbe_1)$ is isomorphic to $\bbz^{\bbn}\oplus (\bbz^{\bbn}/\oplus_{\bbn}\bbz)$ \cite{EKH1ofHE} and $H_1(\bbh\bba)$ is isomorphic to $\bbz^{\bbn}/\oplus_{\bbn}\bbz$ \cite{HHcotorsion,KarimovRepovs} but that such isomorphisms are highly non-constructive.

\begin{corollary}
For any based Hausdorff space $(X,x_0)$, $H_1(\Sigma X)$ is canonically isomorphic to a direct limit of groups each of which is (not canonically) isomorphic to either $\bbz^n$ or $\bbz^{\bbn}\oplus (\bbz^{\bbn}/\oplus_{\bbn}\bbz)$. In particular, $H_1(\Sigma X)$ is torsion-free.
\end{corollary}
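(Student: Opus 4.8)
The plan is to push the isomorphism $\psi$ of \Cref{mainthm} through the abelianization functor. First I would recall that for a path-connected space $Y$ the Hurewicz theorem gives a natural isomorphism $H_1(Y)\cong\pi_1(Y)^{\mathrm{ab}}$, and that every reduced suspension of a nonempty based space is path-connected, so this applies to $\Sigma A$ for $A\in\scrp$ and to $\Sigma X$. Since the abelianization functor $(-)^{\mathrm{ab}}\colon\grp\to\mathbf{Ab}$ is left adjoint to the forgetful inclusion, it preserves all colimits, in particular the direct limit over the directed set $\scrp$ (\Cref{directed}). Applying $(-)^{\mathrm{ab}}$ to $\psi$ and invoking naturality of Hurewicz then produces a canonical isomorphism
\[
\psi'\colon\varinjlim_{A\in\scrp}H_1(\Sigma A)\;\xrightarrow{\ \cong\ }\;H_1(\Sigma X)
\]
induced by the inclusions $A\to X$.

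Next I would read off the individual terms from \Cref{loc-free}: for each $A\in\scrp$ the group $\pi_1(\Sigma A,\x)$ is isomorphic either to a finitely generated free group $F_n$ or to the infinite earring group $\pi_1(\bbe_1)$. Abelianizing, $H_1(\Sigma A)$ is (non-canonically) isomorphic to $\bbz^n$ in the first case and, in the second case, to $H_1(\bbe_1)$, which by \cite{EKH1ofHE} is isomorphic to $\bbz^{\bbn}\oplus(\bbz^{\bbn}/\oplus_{\bbn}\bbz)$. Combined with the previous paragraph this gives the first assertion.

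It then remains to deduce torsion-freeness, which I would handle by checking each possible term is torsion-free and that this property passes to direct limits over a directed set. Here $\bbz^n$ and $\bbz^{\bbn}$ are visibly torsion-free, and $\bbz^{\bbn}/\oplus_{\bbn}\bbz$ is torsion-free because a sequence $(a_k)$ with $m(a_k)\in\oplus_{\bbn}\bbz$ must satisfy $a_k=0$ for all but finitely many $k$, so $(a_k)\in\oplus_{\bbn}\bbz$ already; a finite direct sum of torsion-free groups is torsion-free. Finally, if $mx=0$ in $\varinjlim_{A\in\scrp}H_1(\Sigma A)$ for some $m\geq 1$, choose a representative $x_A\in H_1(\Sigma A)$; then $mx_A$ maps to $0$ in the limit, so by directedness it already maps to $0$ in some $H_1(\Sigma B)$ with $A\subseteq B$ in $\scrp$, and torsion-freeness of $H_1(\Sigma B)$ forces $x_A$ to map to $0$ there as well, i.e. $x=0$. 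Transporting along $\psi'$ shows $H_1(\Sigma X)$ is torsion-free. I do not anticipate a substantive obstacle: the only point requiring a little care is that the index set $\scrp$ is directed — so that abelianization commutes with the limit and torsion-freeness is inherited — and this is exactly \Cref{directed}.
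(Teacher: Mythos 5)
Your proposal is correct and follows essentially the same route as the paper: apply the left-adjoint abelianization functor to the direct-limit isomorphism $\psi$ of \Cref{mainthm}, identify the terms via \Cref{loc-free} and the computation of $H_1(\bbe_1)$ from \cite{EKH1ofHE}, and deduce torsion-freeness from torsion-freeness of each term together with directedness of $\scrp$. The only additions beyond the paper's (very brief) argument are your explicit verifications that $\bbz^{\bbn}/\oplus_{\bbn}\bbz$ is torsion-free and that torsion-freeness passes to direct limits, both of which are correct.
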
 

\section{Notation and Preliminaries}

Throughout, $X$ will denote a Hausdorff topological space with basepoint $x_0$. The closed unit interval will be denoted $I$ and $I^2$ is the closed unit square. A \textit{path} in $X$ is a continuous map $\alpha:I\to X$, which we call a \textit{loop} if $\alpha(0)=\alpha(1)$. We let $\alpha\cdot\beta$ denote the usual \textit{concatenation} of paths when $\alpha(1)=\beta(0)$ and we let $\alpha^{-}(t)=\alpha(1-t)$ denote the \textit{reverse} of $\alpha$. The constant path at $x\in X$ is denoted by $c_x$.

If $[a,b],[c,d]\subseteq \ui$ and $\alpha:[a,b]\to X$, $\beta:[c,d]\to X$ are maps, we write $\alpha\equiv\beta$ if $\alpha=\beta\circ h$ for some increasing homeomorphism $h: [a,b]\to [c,d]$; if $h$ is linear and if it does not create confusion, we will identify $\alpha$ and $\beta$. Note that $\equiv$ is an equivalence relation. We write $\alpha\simeq\beta$ if $\alpha$ and $\beta$ are homotopic relative to their endpoints. Certainly, if $\alpha\equiv\beta$, then $\alpha\simeq\beta$ by a homotopy with image in $\im(\alpha)$.

\begin{definition}
Given $x_0\in X$, we say that a countable collection $\scra$ of subsets of $X$ \textit{clusters at} $x_0$ if for every neighborhood $U$ of $x_0$, we have $A\subseteq U$ for all but finitely many $A\in\scra$. If $\scra$ is indexed by $\bbn=\{1,2,3,\dots\}$, we may say that $\scra$ \textit{converges toward} $x_0$.

As special cases, we also define the following. If $C\subseteq X$ is a countable subset, we say $C$ \textit{clusters at} $x_0$ if $\scra=\{\{c\}\mid c\in C\}$ clusters at $x_0$. Similarly, if $\scrf=\{f_j\mid j\in J\}$ is a countable collection of maps $f_j:Y_j\to X$, we say $\scrf$ \textit{clusters at} $x_0$ if $\{\im(f_j)\mid j\in J\}$ clusters at $x_0$.
\end{definition}

Let $(A_j,a_j)$, $j\in J$ be a countable collection of based spaces. The \textit{shrinking wedge} of this collection is the space $\sw_{j\in J}(A_j,a_j)$ whose underlying set is the usual one-point union $\bigvee_{j\in J}(A_j,a_j)$ with canonical basepoint $a_0$ but where $U\subseteq \sw_{j\in J}(A_j,a_j)$ is open if and only if $U\cap A_j$ is open in $A_j$ for all $n\in\bbn$ and if $A_j\subseteq U$ for all but finitely many $j\in J$ whenever $a_0\in U$. Note that $\sw_{j\in J}(A_j,a_j)$ may be canonically embedded as a closed subspace of the direct product $\prod_{j\in J}A_j$. Moreover, by restricting to individual wedge-summands, we see that based maps $f:(\sw_{j\in J}(A_j,a_j),a_0)\to (X,x_0)$ are in one-to-one correspondence with collections $\{f_j\}_{j\in J}$ of based maps $f_j:A_j\to X$ that cluster at $x_0$.

Let $\mci$ be a countable linearly ordered set and $\{\alpha_i\}_{i\in\mci}$ be collection of loops $\alpha_i:(\ui,\partial\ui)\to (X,x_0)$ that clusters at $x_0$. The $\mci$-indexed concatenation of this collection is the loop $\prod_{i\in\mci}\alpha_i$ defined as follows: let $\scru$ be a collection of disjoint open intervals in $(0,1)$ such that (1) $\bigcup\scru$ is dense in $[0,1]$ and (2) when $\scru$ is equipped with the natural ordering inherited from $(0,1)$ there is an order isomorphism $\psi:\mci\to\scru$. Then $\prod_{i\in\mci}\alpha_i$ maps $\ui\backslash\bigcup\scru$ to $x_0$ and agrees with $\alpha_i$ on $\ov{\phi(i)}$. Up to the equivalence relation $\equiv$, the definition of $\prod_{i\in\mci}\alpha_i$ does not depend on the choice of $\scru$.

\subsection{Reduced Suspensions}

For based space $(X,x_0)$, the quotient space \[\Sigma X=\frac{X\times [0,1]}{\{x_0\}\times [0,1]\cup X\times \{0,1\}}\] is the \textit{reduced suspension} of $(X,x_0)$. Let $q:X\times [0,1]\to \Sigma X$ denote the canonical quotient map. The canonical basepoint of $\Sigma X$, which is the image of $\beth=\{x_0\}\times [0,1]\cup X\times \{0,1\}$, will be denoted $\x$. Since $q$ maps $(X\backslash \{x_0\})\times (0,1)$ homeomorphically onto $\Sigma X\backslash \{\x\}$, we will often identify these spaces. Given any point $x\in X$, there is a canonical path $\lambda_x:I\to \Sigma X$ given by $\lambda_x(t)=q(x,t)$ called the \textit{adjoint path at} $x$. We refer to $\lambda_x$ as the \textit{adjoint path at} $x$ since the function $x\mapsto \lambda_x$ is the adjoint of the identity map $\Sigma X\to \Sigma X$. Note that $\lambda_{x_0}$ is the constant path at $\x$.

Let $\bbe_0=\{1,2,3,\dots\}\cup\{\infty\}$ be the one-point compactification of the natural numbers with basepoint $\infty$. This case is of particular interest since convergent sequences $\{x_n\}_{n\in\bbn}\to x_0$ in $X$ correspond uniquely to based maps $(\bbe_0,\infty)\to (X,x_0)$. We refer to $\bbe_0$ as the \textit{$0$-dimensional earring space} since $\bbe_1=\Sigma \bbe_0$ is the \textit{$1$-dimensional earring space}, i.e. $\bbe_1$ is canonically homeomorphic to the planar set $\bigcup_{n\in\bbn}C_n$ where $C_n$ the circle of radius $1/n$ centered at $(1/n,0)$. In this example, $\lambda_{n}:\ui\to \bbe_1$ is the loop parameterizing the $n$-th circle.

\subsection{Standard neighborhoods}

Since we work in significant generality, it is necessary to describe a convenient basis of neighborhoods at $\x$. Given a pointed open cover $\scrv=\{V_x\mid x\in X\}$ of $X$, i.e. where $x\in V_x$ for all $x\in X$, and any (not necessarily continuous) function $\eta:X\to (0,1/3)$, we may define \[O(\scrv,\eta)=(V_{x_0}\times [0,1])\cup \bigcup_{x\in X}V_x\times ([0,\eta(x))\cup(1-\eta(x),1]).\] We refer to a set of the form $O(\scrv,\eta)$ as a \textit{standard neighborhood of} $\beth$ in $X\times \ui$. Hence, $(V\times I)\cup (X\times [0,1/3)\cup (2/3,1])$ is the largest possible standard neighborhood for given neighborhood $V=V_{x_0}$ of $x_0$. A set of the form $O(\scrv,\eta)$ is saturated with respect to $q$ and we refer to its image $q(O(\scrv,\eta))$ as a \textit{standard neighborhood of }$\x$. The standard neighborhoods of $\x$ form a neighborhood base at $\x$ in $\Sigma X$. Hence, if $f_k:Y\to X\times [0,1]$, $k\in\bbn$ is a sequence of maps such that for every standard neighborhood $O(\scrv,\eta)$ of $\beth$, we have $\im(f_k)\subseteq O(\scrv,\eta)$ for all but finitely many $k\in\bbn$, then $\{q\circ f_k\}_{k\in\bbn}$ is a sequence of maps $Y\to\Sigma X$ that converges toward $\x$. Finally, we establish notation for the following open cover of $O(\scrv,\eta)$: let
\begin{enumerate}
\item $O(\scrv,\eta)^{\down}=(V_{x_0}\times [0,1])\cup \bigcup_{x\in X}V_x\times [0,\eta(x))$,
\item $O(\scrv,\eta)^{\up}=(V_{x_0}\times [0,1])\cup \bigcup_{x\in X}V_x\times (1-\eta(x),1]$.
\end{enumerate}

\begin{remark}\label{closed}
Let $Y$ be a closed subset of $X$ containing $x_0$ and $O(\mathscr{W},\zeta)$ be a standard neighborhood in $Y\times \ui$ with $\mathscr{W}=\{W_y\mid y\in Y\}$. We can extend $\zeta:Y\to (0,1/3)$ to $\eta:X\to (0,1/3)$ by defining $\eta(x)=1/4$ for all $x\in X\setminus Y$. If $x\in Y$, find $V_x$ open in $X$ such that $V_x\cap Y=W_x$ and if $x\in X\backslash Y$ let $V_x=X\backslash Y$. If we set $\scrv=\{V_x\mid x\in X\}$, then $O(\mathscr{V},\eta)\cap (Y\times I)=O(\mathscr{W},\zeta)$. It follows that the inclusion $Y\to X$ induces an embedding $\Sigma Y\to \Sigma X$. Hence, we may identify $\Sigma Y$ naturally as a subspace of $\Sigma X$.
\end{remark}

%

\subsection{Downward and upward-sliding homotopies}

\begin{definition}
Let $S\subseteq \Sigma X$ be a subset.
\begin{enumerate}
\item The \textit{downward hull} of $S$ in $\Sigma X$ is $\dh(S)=\{\x\}\cup\bigcup_{(x,t)\in S\backslash\x}q(\{x\}\times [0,t])$.
\item The \textit{upward hull} of $S$ in $\Sigma X$ is $\uh(S)=\{\x\}\cup\bigcup_{(x,t)\in S\backslash\x}q(\{x\}\times [t,1])$.
\end{enumerate}
\end{definition}

\begin{remark}
Notice that if $A\subseteq B\subseteq \Sigma X$, then $\dh(A)\subseteq \dh(B)$ and $\uh(A)\subseteq \uh(B)$. If $O(\scrv,\eta)$ is a standard neighborhood of $\beth$, and $S\subseteq q(O(\scrv,\eta)^{\down})$, then $\dh(S)\subseteq \dh(q(O(\scrv,\eta)^{\down}))= q(O(\scrv,\eta)^{\down})$ and if $S\subseteq q(O(\scrv,\eta)^{\up})$, then $\uh(S)\subseteq \uh(q(O(\scrv,\eta)^{\up}))= q(O(\scrv,\eta)^{\up})$.
\end{remark}

\begin{definition}
Suppose that $Y$ is a space and $f=(f_1,f_2):Y\to X\times \ui$ is a map.
\begin{enumerate}
\item The \textit{downward homotopy} of $f$ is the map $H_{f}^{\down}:Y\times I\to X\times I$ defined by $H^{\down}_{f}(y,t)=(f_1(y), f_2(y)(1-t))$;
\item The \textit{upward homotopy} of $f$ is the map $H_{f}^{\up}:Y\times I\to X\times I$ defined by $H^{\up}_{f}(y,t)=(f_1(y), f_2(y)(1-t)+t)$.
\end{enumerate}
\end{definition}

\begin{remark}
Note that if $\im(f)\subseteq \Sigma X\backslash\{\x\}$, then $\im(q\circ H_{f}^{\down})=\dh(\im(f))$ and $\im(q\circ H_{f}^{\up})=\uh(\im(f))$. Additionally,
\begin{enumerate}
\item If $\im(f)\subseteq O(\scrv,\eta)^{\down}$, then $\im(H^{\down})\subseteq O(\scrv,\eta)^{\down}$,
\item If $\im(f)\subseteq O(\scrv,\eta)^{\up}$, then $\im(H^{\up})\subseteq O(\scrv,\eta)^{\up}$.
\end{enumerate}
\end{remark}

\begin{lemma}\label{hep}
Let $K$ be a simplicial complex and $L\subseteq K$ a subcomplex. There exists a retraction $r:K\times I\to K\times\{1\}\cup L\times I$. Moreover, if $i:K\times\{1\}\cup L\times I\to K\times I$ denotes the inclusion, we may assume that $i\circ r(\sigma \times I)\subseteq \sigma\times I$ for each simplex $\sigma\subseteq K$.
\end{lemma}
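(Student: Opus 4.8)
The plan is to build the retraction $r$ by induction on the skeleta of $K$ (relative to $L$), using the elementary fact that for the standard simplex $\Delta^n$ there is a retraction of $\Delta^n\times I$ onto $\Delta^n\times\{1\}\cup \partial\Delta^n\times I$. Concretely, think of $\Delta^n\times I$ sitting in $\mathbb{R}^{n+1}$ and project radially from the point $p$ lying directly above the barycenter at height $2$ (i.e.\ $p=(\mathrm{bary}(\Delta^n),2)$); this gives a deformation retraction $\rho_n:\Delta^n\times I\to \Delta^n\times\{1\}\cup\partial\Delta^n\times I$ whose image lies entirely in $\Delta^n\times I$, and which is the identity on $\Delta^n\times\{1\}\cup\partial\Delta^n\times I$. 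This is the standard ``mapping cylinder neighborhood'' picture and is where the clause $i\circ r(\sigma\times I)\subseteq \sigma\times I$ comes from for free.

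First I would set $K^{(-1)}=L$ and $K^{(k)}=L\cup (\text{$k$-skeleton of }K)$, and construct retractions $r_k:K\times I\to K\times\{1\}\cup K^{(k)}\times I$ by downward induction is not quite right—rather, I build a retraction $s_k: K^{(k)}\times\{1\}\cup K^{(k-1)}\times I \hookleftarrow$... let me instead phrase it as: I construct, by induction on $k\ge -1$, a retraction $r_k:(K^{(k)}\cup L)\times I$... Actually the cleanest bookkeeping is this. For each $k$, let $Y_k=K\times\{1\}\cup K^{(k)}\times I$, so $Y_{-1}=K\times\{1\}\cup L\times I$ is the target and $Y_{\dim K}=K\times I$ is the source. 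It suffices to produce, for every $k\ge 0$, a retraction $\sigma_k:Y_k\to Y_{k-1}$ with $\sigma_k(\tau\times I)\subseteq \tau\times I$ for each cell $\tau$ of $K$; then $r=\sigma_0\circ\sigma_1\circ\cdots\circ\sigma_{\dim K}$ does the job, and the cell-preservation property is preserved under composition. To build $\sigma_k$: on $Y_{k-1}$ it is the identity, and on each $k$-simplex $\tau$ of $K$ not in $L$ we already have $\tau\times\{1\}\cup\partial\tau\times I\subseteq Y_{k-1}$, so we glue in the model retraction $\rho_k$ (transported to $\tau$ by the characteristic map) on $\tau\times I$. These agree on overlaps because they all restrict to the identity on $\tau\times\{1\}\cup\partial\tau\times I$, and continuity follows since $K$ (being a simplicial complex) has the weak topology with respect to its simplices, so a map out of $K\times I$ is continuous iff its restriction to each $\tau\times I$ is. If $K$ is infinite-dimensional one composes the $\sigma_k$ in the evident way: on a given simplex only finitely many $\sigma_k$ act nontrivially, so the infinite composite is well-defined and continuous by the weak topology argument again.

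The one genuine point to check, rather than merely assert, is the construction and continuity of the model retraction $\rho_n:\Delta^n\times I\to \Delta^n\times\{1\}\cup\partial\Delta^n\times I$ together with its restriction properties; everything else is formal gluing. So I expect the main (modest) obstacle to be verifying that radial projection from the apex point is well-defined on all of $\Delta^n\times I$ (the apex is chosen strictly above the top face so every ray from it through a point of $\Delta^n\times I$ does meet $\Delta^n\times\{1\}\cup\partial\Delta^n\times I$ exactly once), that it fixes the target pointwise, and that it is continuous—all of which are straightforward convexity computations. A secondary bookkeeping obstacle is making sure the cell-wise containment $i\circ r(\sigma\times I)\subseteq\sigma\times I$ survives the composition $\sigma_0\circ\cdots\circ\sigma_{\dim K}$: this is clear because each $\sigma_k$ maps $\sigma\times I$ into $\sigma\times I$ (it only moves points within the prism over $\sigma$, via $\rho_k$ applied to the top-dimensional faces of $\sigma$ and inductively to lower faces), and a composition of such maps has the same property.
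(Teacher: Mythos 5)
Your argument is exactly the standard one the paper invokes (it simply cites Hatcher, Prop.\ 0.16, for both parts), so the approach matches: build the model retraction of a prism onto top-plus-walls and glue over the skeleta relative to $L$, noting that the prism-preserving property survives composition and that the weak topology handles continuity and the infinite-dimensional case. One slip: since the target here is $\Delta^n\times\{1\}\cup\partial\Delta^n\times I$ (the \emph{top} face union the walls), the projection point must be placed \emph{below} the prism, e.g.\ at $(\mathrm{bary}(\Delta^n),-1)$; projecting from a point at height $2$ above the barycenter retracts onto $\Delta^n\times\{0\}\cup\partial\Delta^n\times I$ instead. With that sign corrected, the proof is complete and agrees with the paper's intended argument.
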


The first part of the lemma is simply the statement that $(K,L)$ has the homotopy extension property. The second part of the lemma is an immediate consequence of standard proofs of the first part, see \cite[Prop 0.16]{Hatcher} for example. We explicitly state the second part of the lemma because it will be useful for us to have a precise control of the retraction produced.

\begin{definition}\label{push-up-down}
Let $(K,L)$ be a pair of simplicial complexes with $L\subseteq K$. Let $f=(f_1,f_2):K\to X\times I$ be a map, and let $H_f^{\down}:K\times I\to X\times I$ be the downward homotopy of $f$. Let $r$ be the retraction of $K\times I$ onto $K\times\{1\}\cup L\times I$ from \Cref{hep}. The restriction of $H^{\down}_f\circ r$ to $K\times\{0\}$ defines a map $f^{\down}_L:K\to X\times I$ which we call the \textit{pushdown of $f$ rel. $L$}. Note that $f^{\down}_L|_L=f|_L$ and that $f^{\down}_L$ has image in 
$$X\times\{0\}\cup\Big\{(f_1(a),t)\mid a\in L, 0\leq t\leq f_2(a)\Big\}.$$ 
In a symmetric manner, if $H_{f}^{\up}:Y\times I\to X\times I$ is the upward homotopy of $f$ and $r$ is the retraction of $K\times I$ onto $K\times\{1\}\cup L\times I$ from \Cref{hep}, then the restriction of $H^{\up}_f\circ r$ to $K\times\{0\}$ defines a map $f^{\up}_L:K\to X\times I$ which we call the \textit{pushup of $f$ rel. $L$}.
\end{definition}

If $L$ is clear from context, we may simply write $f^{\down}$ and $f^{\up}$ for the pushdown and pushup of $f$ respectively.

\begin{remark}\label{updownsimplices}
Note that $f$ is homotopic rel. $L$ to $f_{L}^{\down}$ (respectively $f_{L}^{\up}$) by a homotopy with image in $\bigcup_{k\in K}\{k\}\times[0,f_2(k)]$ (respectively, by a homotopy with image in $\bigcup_{k\in K}\{k\}\times[f_2(k),1]$). Moreover, the condition placed on $r$ from \Cref{hep} in the definition of $f_L^{\down}$ (respectively $f_L^{\up}$) ensures that for each simplex $\sigma\subseteq K$, $f^{\down}_L(\sigma)\subseteq \bigcup_{x\in \sigma}\{x\}\times[0,f_2(x)]$ (respectively, $f^{\up}_L(\sigma)\subseteq \bigcup_{x\in \sigma}\{x\}\times[f_2(x), 1]$). In particular, this means that $q\circ f^{\down}_L(\sigma)\subseteq \dh(q\circ f(\sigma))$ and $q\circ f^{\up}_L(\sigma)\subseteq \uh(q\circ f(\sigma))$ for each simplex $\sigma\subseteq K$.
\end{remark}

\begin{example}\label{pathexample}
The first situation in which we will apply pushdowns and pushups is the case where $(K,L)=(I,\partial I)$. If $\alpha=(\alpha_1,\alpha_2):\ui\to X\backslash\{x_0\}\times (0,1)$ is a path from $(a,s)$ to $(b,t)$, then $\alpha\simeq\alpha_{\partial I}^{\down}$ in $X\times \ui$. Moreover, when viewing $\alpha$ as a path $\alpha:\ui\to \Sigma X$, we have
\[\alpha \simeq q\circ \alpha_{\partial I}^{\down}\equiv (\lambda_{a})|_{[0,s]}^{-}\cdot c_{\x}\cdot \lambda_{b}|_{[0,t]}.\]
by a homotopy with image in $\dh(\im(q\circ \alpha))$. Similarly, $\alpha$ is path-homotopic to
\[q\circ \alpha_{\partial I}^{\up}\equiv (\lambda_{a})|_{[s,1]}\cdot c_{\x}\cdot (\lambda_{b}|_{[t,1]})^{-}.\]
by a homotopy with image in $\uh(\im(q\circ \alpha))$.
\end{example}

\section{Deforming arbitrary loops in $\Sigma X$}

\begin{definition}
Let $\scrc(X,x_0)$ denote the set of countable subsets of $X$ containing $x_0$ that cluster at $x_0$. If it will not cause confusion we may write $\scrc$ instead of $\scrc(X,x_0)$.
\end{definition}

If $C\in \scrc$, then, since $X$ is assumed to be Hausdorff, $C$ is closed in $X$ and either finite or homeomorphic to $\bbe_0$. Thus $\Sigma C$ may be viewed as a subspace of $\Sigma X$ (recall \Cref{closed}) and is homeomorphic to either a point, a finite wedge of circles, or $\bbe_1$.

\begin{definition}
We say loop $\alpha:[a,b]\to \Sigma X$ is \textit{irreducible} if $\alpha^{-1}(\x)=\{a,b\}$.
\end{definition}

\begin{lemma}\label{simplelooplemma}
For every irreducible loop $\alpha:\ui\to \Sigma X$, there exists $C\in\scrc$ and a surjective loop $\beta:\ui\to \Sigma C$, which is path-homotopic to $\alpha$ in $\Sigma X$. Moreover, if $\im(\alpha)$ lies in standard neighborhood $q(O(\scrv,\eta))$, then this path-homotopy may be chosen to have image in $q( O(\scrv,\eta))$.
\end{lemma}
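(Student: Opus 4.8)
We may assume from the outset that $\im(\alpha)\subseteq q(O(\scrv,\eta))$ for a standard neighborhood $q(O(\scrv,\eta))$ and that the required homotopy must have image in $q(O(\scrv,\eta))$: if no such neighborhood is prescribed, take $V_x=X$ for every $x$ and $\eta\equiv 1/4$, so that $q(O(\scrv,\eta))=\Sigma X$ is itself a standard neighborhood. The plan is to deform $\alpha$, rel endpoints and inside $q(O(\scrv,\eta))$, onto a countably-indexed concatenation of forward and reverse adjoint loops $\prod_{i\in\mci}\lambda_{u_i}^{\pm 1}$ and then set $C=\{x_0\}\cup\{u_i\mid i\in\mci\}$; the real content is arranging that $C\in\scrc$.

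First I would build a countable subdivision of $I$ adapted to $\alpha$. Since $\alpha^{-1}(\x)=\{0,1\}$ and the standard neighborhoods of $\x$ form a neighborhood base, continuity of $\alpha$ at $0$ and $1$ lets me choose $0<a_1<b_1<1$ with $\alpha([0,a_1]\cup[b_1,1])$ inside a small standard neighborhood $q(O^{(1)})\subseteq q(O(\scrv,\eta))$, and then sequences $a_1>a_2>\cdots\to 0$, $b_1<b_2<\cdots\to 1$ together with standard neighborhoods $q(O^{(k)})\subseteq q(O^{(1)})$ shrinking toward $\x$ so that $\alpha$ carries the shell $R_k:=[a_{k+1},a_k]\cup[b_k,b_{k+1}]$ into $q(O^{(k)})$; on the core $R_0:=[a_1,b_1]$ I use $O^{(0)}:=O(\scrv,\eta)$. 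Each $\alpha(R_k)$ is a compact subset of $\Sigma X\setminus\{\x\}$, hence covered by the open sets $q(O^{(k),\down})\setminus\{\x\}$ and $q(O^{(k),\up})\setminus\{\x\}$, so the Lebesgue number lemma lets me chop $R_k$ into finitely many closed subintervals each mapped by $\alpha$ into $q(O^{(k),\down})$ or into $q(O^{(k),\up})$; after perturbing the division points to avoid $\alpha^{-1}(\x)$ these partitions assemble to a countable subdivision $\{t_i\}_{i\in\mci}$ of $I$ whose only accumulation points are $0$ and $1$, each closed subinterval $J_i$ between consecutive points mapped into a downward or upward part of a standard neighborhood contained in $q(O(\scrv,\eta))$.

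Next, on each $J_i$ I apply the pushdown rel $\partial J_i$ or the pushup rel $\partial J_i$ according to type; writing $\alpha(t_i)=q(u_i,s_i)$, \Cref{pathexample} says this is a path-homotopy, rel the division points, with image in $\dh(\alpha(J_i))$ (resp. $\uh(\alpha(J_i))$), hence in $q(O(\scrv,\eta))$, and that it replaces $\alpha|_{J_i}$ by $(\lambda_{u_{i-1}})|_{[0,s_{i-1}]}^{-}\cdot c_{\x}\cdot\lambda_{u_i}|_{[0,s_i]}$ (resp. its upward analogue). Since the homotopies on pieces near $0$ and $1$ have image in the shrinking neighborhoods $q(O^{(k)})$, they glue to a genuine path-homotopy of the whole loop $\alpha$. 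Whenever two adjacent pieces have equal type, the two partial adjoint paths meeting at their common endpoint form a spike $\lambda_{u_i}|_{[0,s_i]}\cdot\lambda_{u_i}|_{[0,s_i]}^{-}$ (or the upward version) which I contract rel endpoints inside its own image $q(\{u_i\}\times[0,s_i])\subseteq q(O(\scrv,\eta))$; after contracting all such spikes, every maximal block of equal-type pieces collapses onto the partial adjoint paths at its two ends, and at a division point where a downward block abuts an upward block the two surviving partial adjoint paths fuse into the full adjoint loop $\lambda_{u_i}$. Thus $\alpha$ is path-homotopic inside $q(O(\scrv,\eta))$ to the concatenation $\beta:=\prod_{i\in\mci}\lambda_{u_i}^{\epsilon_i}$ over the countable linearly ordered set $\mci$ of such transition points; with $C:=\{x_0\}\cup\{u_i\mid i\in\mci\}$ the loop $\beta$ has image $\{\x\}\cup\bigcup_i q(\{u_i\}\times I)=\Sigma C$ and is therefore surjective onto $\Sigma C$ (this also covers the cases where $\mci$ is finite or empty, where $\Sigma C$ is a finite wedge of circles or a point).

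The remaining, and main, obstacle is to see that $C\in\scrc$, i.e. that $\{u_i\}$ clusters at $x_0$. The key geometric point is that a point lying in both a downward part $q(O^{(k),\down})$ and an upward part $q(O^{(k'),\up})$ of standard neighborhoods must have its $X$-coordinate in $V^{(k)}_{x_0}\cup V^{(k')}_{x_0}$, because the height intervals $[0,\eta^{(k)}(x))$ and $(1-\eta^{(k')}(x),1]$ are disjoint; hence each transition coordinate $u_i$ lies in one of the base neighborhoods $V^{(k)}_{x_0}$ with $k\to\infty$ as $t_i\to 0$ or $1$. Only finitely many $u_i$ arise from the core $R_0$, so it suffices to choose the shell-neighborhoods so that $\{V^{(k)}_{x_0}\}_{k\ge 1}$ clusters at $x_0$. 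Here I would invoke that $\im(\alpha)$, being a continuous image of $I$ in a Hausdorff space, is a Peano continuum, in particular metrizable (Hahn–Mazurkiewicz): this furnishes a countable neighborhood base at $\x$ inside $\im(\alpha)$, and, crucially, if $q(z_n,s_n)\to\x$ in $\im(\alpha)$ with the heights $s_n$ bounded away from $0$ and $1$ then $z_n\to x_0$. Combining these two facts allows me to select the $q(O^{(k)})$ shrinking toward $\x$ with base neighborhoods clustering at $x_0$, which finishes the proof; the only input beyond the pushdown/pushup calculus and the Lebesgue number lemma is the Hausdorffness of $\Sigma X$.
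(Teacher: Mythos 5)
Your overall strategy---subdivide the irreducible loop near its endpoints, push each piece down or up, cancel inverse pairs, and read off $C$ from the surviving adjoint loops---is the same as the paper's, but there is a genuine gap in how you classify the pieces, and it propagates into your clustering argument. You sort the subintervals of each shell $R_k$ according to whether $\alpha$ maps them into $q(O^{(k),\down})$ or $q(O^{(k),\up})$. Since $O^{(k),\down}$ contains the full cylinder $V^{(k)}_{x_0}\times[0,1]$, a ``down'' piece may contain points $(x,t)$ with $t$ arbitrarily close to $1$ and $x\in V^{(k)}_{x_0}$; pushing such a point down sweeps the fiber $\{x\}\times[0,t]$ through the middle band $X\times[1/3,2/3]$. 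For an arbitrary smaller standard neighborhood $O(\scrv',\eta')$ one has $\{x\}\times[1/3,2/3]\subseteq O(\scrv',\eta')$ only when $x\in V'_{x_0}$, and nothing forces $V^{(k)}_{x_0}\subseteq V'_{x_0}$. So the glued homotopy need not be continuous at the points of $\{0,1\}\times\ui$ and need not stay inside a prescribed $q(O(\scrv,\eta))$. The paper sidesteps this by classifying pieces with the \emph{fixed} height cover $U^{\down}=X\times[0,2/3)$, $U^{\up}=X\times(1/3,1]$: a down piece then has all heights below $2/3$, and one checks directly that $(X\times[0,2/3))\cap O(\scrv',\eta')\subseteq O(\scrv',\eta')^{\down}$, so downward hulls never leave $O(\scrv',\eta')$.

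The second, related problem is your requirement that the base neighborhoods $V^{(k)}_{x_0}$ cluster at $x_0$. A countable family of neighborhoods of $x_0$ that clusters at $x_0$ is in particular a countable neighborhood base at $x_0$, so this is literally impossible unless $X$ is first countable at $x_0$---a hypothesis the paper deliberately avoids. Metrizability of $\im(\alpha)$ furnishes a countable base at $\x$ \emph{inside} $\im(\alpha)$, but it does not manufacture open neighborhoods of $x_0$ in $X$ with the required property. The correct way to get $C\in\scrc$ is the paper's a posteriori argument: once $\beta=\prod_m\lambda_{y_m}^{\epsilon_m}$ is known to be continuous, testing it against the standard neighborhood $V\times\ui\cup(X\times[0,1/3)\cup(2/3,1])$ forces $(y_m,1/2)\in q(W)$, hence $y_m\in V$, for all but finitely many $m$. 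This needs no countable base at all, but it does presuppose the continuity of $\beta$, which your construction has not secured because of the first issue.
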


\begin{proof}
Let $\alpha:(I,\partial I)\to (\Sigma X, \x)$ be an irreducible loop. Find a strictly increasing function $a:\bbz\to (0,1)$, written $a(n)=a_n$ such that as $n\to \infty$, $a(n)\to 1$ and $a(-n)\to 0$. Set $\alpha_n=\alpha|_{[a_n,a_{n+1}]}$ so that $\alpha\equiv\prod_{n\in\bbz}\alpha_n$. Since $\alpha$ is irreducible, we have $\im(\alpha_n)\subseteq (X\backslash\{x_0\})\times (0,1)$ for all $n\in\bbz$ and thus we may write $\alpha(a_n)=(y_n,s_n)$ for $y_n\in X\backslash\{x_0\}$ and $s_n\in (0,1)$. Consider the open cover of $X\times I$ by sets $U^{\down}=X\times [0,2/3)$ and $U^{\up}=X\times (1/3,1]$. Since $\alpha_n:I\to X\times I$ is continuous, we may find a partition $a_{n}=t_{(n,0)}<t_{(n,1)}<t_{(n,2)}<\cdots <t_{(n,k_n)}=a_{n+1}$ such that $\alpha_n$ maps each interval $[t_{(n,j)},t_{(n,j+1)}]$ into either $U^{\down}$ or $U^{\up}$. Notice that $T=\{t_{(n,k)}\mid n\in\bbz,0\leq j\leq k_n\}$ is a closed subset of $I\backslash \partial I$ with $\inf(T)=0$, $\sup(T)=1$, and which has the order type of $\bbz$. Choose an order isomorphism $a':\bbz\to T$. By replacing $a$ with $a'$, we may assume from the start that $\alpha_n$ has image in either $U^{\down}$ or $U^{\up}$ for all $n\in\bbz$.

Recall the pushup and pushdown construction in \Cref{push-up-down}, particularly the case described in \Cref{pathexample}. If $\im(\alpha_n)\subseteq U^{\down}$, we have homotopy $H_{\alpha_n}^{\down}:[a_n,a_{n+1}]\times\ui\to X\times \ui$ and the pushdown $\alpha_{n}^{\down}:[a_n,a_{n+1}]\to X\times \ui$ rel. $\partial I$. Then $q\circ H_{\alpha_n}^{\down}$ can be modified to a path-homotopy $G_n:[a_n,a_{n+1}]\times\ui\to \Sigma X$ from $\alpha_n$ to $\gamma_n=q\circ \alpha_{n}^{\down}$ with image in $\dh(\im(\alpha_n))$. On the other hand, if $\im(\alpha_n)\nsubseteq U^{\down}$, we have $\im(\alpha_n)\subseteq U^{\up}$ and we consider the homotopy $H_{\alpha_n}^{\up}:[a_n,a_{n+1}]\times\ui\to X\times I$ and the pushup $\alpha_{n}^{\up}:[a_n,a_{n+1}]\to X\times \ui$ rel. $\partial I$. Then $q\circ H_{\alpha_n}^{\up}$ can be modified to a path-homotopy $G_n:[a_n,a_{n+1}]\times\ui\to \Sigma X$ from $\alpha_n$ to $\gamma_n=q\circ \alpha_{n}^{\up}$ with image in $\uh(\im(\alpha_n))$.

Let $G:\ui^2\to \Sigma X$ be defined so that $G(\{0,1\}\times\ui)=\x$ and to agree with $G_n$ on $[a_n,a_{n+1}]\times\ui$. To verify the continuity of $G$, it suffices to check the continuity of $G$ at points in $\{0,1\}\times\ui$. Let $O(\scrv,\eta)$ be a standard neighborhood of $\beth$ and recall that $O(\scrv,\eta)\subseteq (V_{x_0}\times I)\cup (X\times [0,1/3)\cup (2/3,1])$. The continuity of $\alpha$ at $0$ ensures that there exists a $N\in\bbn$ such that $\alpha([0,a_{1-N}])\subseteq q(O(\scrv,\eta))$. Thus $\im(\alpha_n)\subseteq O(\scrv,\eta)$ for $n\leq -N$. Fix $n\leq -N$. If $\im(\alpha_n)\subseteq U^{\down}$, then $\im(\alpha_n)$ lies in $(X\times [0,2/3))\cap O(\scrv,\eta)\subseteq O(\scrv,\eta)^{\down}$. Therefore, \[\im(G_n)=\im(q\circ H_{\alpha_n}^{\down})\subseteq \dh(\im(\alpha_n))\subseteq \dh(q(O(\scrv,\eta)^{\down}))\subseteq q(O(\scrv,\eta)).\]On the other hand, if $\im(\alpha_n)\nsubseteq U^{\down}$, then $\im(\alpha_n)$ lies in $(X\times (1/3,1])\cap O(\scrv,\eta)$ and we have \[\im(G_n)=\im(q\circ H_{\alpha_n}^{\up})\subseteq \uh(\im(\alpha_n))\subseteq \uh(q(O(\scrv,\eta)^{\up}))\subseteq q(O(\scrv,\eta)).\]Thus $G([0,a_{1-N}]\times \ui)\subseteq q(O(\scrv,\eta))$, proving that $G$ is continuous at the points of $\{0\}\times \ui$. The symmetric argument shows that $G$ is continuous at the points of $\{1\}\times\ui$. Note that this analysis ensures that if $\im(\alpha)\subseteq q(O(\scrv,\eta))$, then $\im(G)\subseteq q(O(\scrv,\eta))$.

Now $G$ gives a path-homotopy in $\Sigma X$ from $\alpha$ to $\gamma(t)=G(t,1)$ where $\gamma_n=\gamma|_{[a_n,a_{n+1}]}$. Recall from \Cref{pathexample} that since $\alpha_n$ has endpoints $(y_n,s_n)$ and $(y_{n+1},s_{n+1})$, $\gamma_n$ is a reparameterization of either $(\lambda_{y_{n}})|_{[0,s_n]}^{-}\cdot c_{\x}\cdot \lambda_{y_{n+1}}|_{[0,s_{n+1}]}$ or $(\lambda_{y_n})|_{[s_n,1]}\cdot c_{\x}\cdot (\lambda_{y_{n+1}}|_{[s_{n+1},1]})^{-}$. After deleting the middle constant subloops, this factorization of $\gamma$ may be reassociated to a $\bbz$-indexed concatenation $\delta\equiv\prod_{n\in\bbz}\zeta_n$ where each factor $\zeta_n$ is an inverse pair $(\lambda_{y_{n}})|_{[0,s_n]}\cdot(\lambda_{y_{n}})|_{[0,s_n]}^{-} $ or $(\lambda_{y_n})|_{[s_n,1]}^{-}\cdot(\lambda_{y_n})|_{[s_n,1]}$ or a non-trivial loop of the form $\lambda_{y_n}^{\epsilon_n}$ for $\epsilon_n\in\{\pm\}$. Set $M=\{m\in\bbz\mid \zeta_m\equiv\lambda_{y_m}^{\epsilon_m}\}$, viewed as a suborder of $\bbz$ (note that $M$ is order isomorphic to either $\{1,2,\dots,m\}$, $\bbn$, the reverse order of $\bbn$, or $\bbz$). By canceling the factors $\zeta_n$, $n\in\bbz\backslash M$ (those which are inverse pairs), we see that $\gamma$ either contracts in $\im(\gamma)$ or is path-homotopic within $\im(\gamma)$ to a concatenation of the form $\beta=\prod_{m\in M}\lambda_{y_{m}}^{\epsilon_m}$. 

Set $C=\{x_0\}\cup\{y_{m}\mid m\in M\}$. Certainly, we have $\im(\beta)=\Sigma C$. Since the homotopy $\gamma\simeq \beta$ lies in $\im(\gamma)$, the composed homotopy $\alpha\simeq \gamma$ still lies in $q(O(\scrv,\eta))$ whenever $\im(\alpha)$ does. To complete the proof, we check that $C\in\scrc$, i.e. that $C$ clusters at $x_0$. Let $V$ be a neighborhood of $x_0$ in $X$ and consider the standard neighborhood $W=V\times\ui\cup (X\times [0,1/3)\cup (2/3,1])$ of $\beth$. The continuity of $\beta$ ensures that $\lambda_{y_m}^{\epsilon_m}(1/2)=(y_{m},1/2)\in q(W)$ for all but finitely many $m\in M$. In particular, $y_{m}\in V$ for all but finitely many $m\in M$. Thus $C$ clusters at $x_0$.
\end{proof}

In the next theorem, we use the irreducible loop case to prove the same result for arbitrary loops.

\begin{theorem}\label{standardform}
For every based loop $\alpha:\ui\to \Sigma X$, there exists $C\in\scrc$ and a surjective loop $\beta:\ui\to \Sigma C$, which is path-homotopic to $\alpha$ in $\Sigma X$. Moreover, if $\im(\alpha)$ lies in standard neighborhood $q(O(\scrv,\eta))$, then this path-homotopy may be chosen to have image in $q( O(\scrv,\eta))$.
\end{theorem}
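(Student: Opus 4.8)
The plan is to reduce to the irreducible case (\Cref{simplelooplemma}) by cutting $\alpha$ along the closed set $\alpha^{-1}(\x)\subseteq I$. First I would write the complement $I\setminus\alpha^{-1}(\x)$ as a countable family $\{(a_j,b_j)\}_{j\in J}$ of pairwise disjoint open intervals. Since $\{0,1\}\subseteq\alpha^{-1}(\x)$ (as $\alpha$ is based) and each $a_j,b_j\in\alpha^{-1}(\x)$ while $\alpha$ avoids $\x$ on $(a_j,b_j)$, every restriction $\alpha_j:=\alpha|_{[a_j,b_j]}$ is (a reparametrization of) an irreducible loop. The key preliminary observation is that $\{\alpha_j\}_{j\in J}$ clusters at $\x$: given a standard neighborhood $q(O(\scrv,\eta))$ of $\x$, the set $I\setminus\alpha^{-1}(q(O(\scrv,\eta)))$ is compact and disjoint from $\alpha^{-1}(\x)$, hence covered by finitely many of the $(a_j,b_j)$; every other $[a_j,b_j]$ is then disjoint from it, so $\im(\alpha_j)\subseteq q(O(\scrv,\eta))$ for all but finitely many $j$.

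Next, for each $j$ I would apply \Cref{simplelooplemma} to $\alpha_j$, obtaining $C_j\in\scrc$, a surjective loop $\beta_j:[a_j,b_j]\to\Sigma C_j$, and a path-homotopy $H_j:[a_j,b_j]\times I\to\Sigma X$ from $\alpha_j$ to $\beta_j$. The point I would emphasize is that the \emph{construction} in the proof of \Cref{simplelooplemma} refers to no particular standard neighborhood, so a single $H_j$ can be taken with the property that $\im(H_j)\subseteq q(O(\scrv,\eta))$ for \emph{every} standard neighborhood $q(O(\scrv,\eta))$ with $\im(\alpha_j)\subseteq q(O(\scrv,\eta))$. I then glue: define $H:I\times I\to\Sigma X$ by $H|_{[a_j,b_j]\times I}=H_j$ and $H(t,s)=\x$ for $t\in\alpha^{-1}(\x)$. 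This is consistent because $H_j(a_j,s)=H_j(b_j,s)=\x$, it is defined on all of $I\times I$ since $I=\alpha^{-1}(\x)\cup\bigcup_j[a_j,b_j]$, and it restricts to $\alpha$ on $I\times\{0\}$ and to $\x$ on $\{0,1\}\times I$.

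The main obstacle is verifying that $H$ is continuous at points $(t_0,s_0)$ with $t_0\in\alpha^{-1}(\x)$ (continuity elsewhere is immediate from continuity of the $H_j$). Fix a standard neighborhood $N=q(O(\scrv,\eta))$ of $\x=H(t_0,s_0)$. By the clustering observation only finitely many indices $j$ are ``bad'', i.e.\ have $\im(\alpha_j)\not\subseteq N$; at most two of the corresponding closed intervals $[a_j,b_j]$ contain $t_0$, and for those finitely many I would invoke continuity of the individual $H_j$ at $(t_0,s_0)$ together with $H_j(t_0,\cdot)=\x$ to produce a basic neighborhood of $(t_0,s_0)$ whose trace on $[a_j,b_j]\times I$ maps into $N$. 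The remaining bad intervals form a closed subset of $I$ missing $t_0$, hence can be separated from $t_0$; continuity of $\alpha$ at $t_0$ handles the nearby points of $\alpha^{-1}(\x)$; and for every good $j$ the uniform property above gives $\im(H_j)\subseteq N$. Intersecting these finitely many constraints yields a product neighborhood of $(t_0,s_0)$ mapped by $H$ into $N$.

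Finally I would set $C:=\{x_0\}\cup\bigcup_{j\in J}C_j$. Applying the uniform property of the $H_j$ to neighborhoods of the form $q\big(V\times I\cup X\times([0,1/3)\cup(2/3,1])\big)$ shows that whenever $\im(\alpha_j)$ lies in such a neighborhood so does $\im(\beta_j)=\Sigma C_j$, and then (testing the adjoint paths at level $1/2$, exactly as at the end of the proof of \Cref{simplelooplemma}) that $C_j\subseteq V$; combined with the clustering of $\{\alpha_j\}_{j\in J}$ this gives that $\{C_j\}_{j\in J}$ clusters at $x_0$, whence $C\in\scrc$. Now $\im(\gamma)=\bigcup_j\im(\beta_j)=\bigcup_j\Sigma C_j=\Sigma C$ as subspaces of $\Sigma X$ (using \Cref{closed}, since $C$ is closed), so $\gamma:=H(\cdot,1)$ corestricts to a surjective loop $\beta:I\to\Sigma C$ that is path-homotopic to $\alpha$ via $H$. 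The ``moreover'' clause is then automatic: if $\im(\alpha)\subseteq q(O(\scrv,\eta))$ then $\im(\alpha_j)\subseteq q(O(\scrv,\eta))$ for all $j$, so $\im(H)=\{\x\}\cup\bigcup_j\im(H_j)\subseteq q(O(\scrv,\eta))$.
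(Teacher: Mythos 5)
Your proposal is correct and follows essentially the same route as the paper: decompose $\alpha$ along $\alpha^{-1}(\x)$ into irreducible loops, observe that these cluster at $\x$, apply \Cref{simplelooplemma} to each piece with the uniform containment property of the homotopies, glue, and verify continuity and that $C=\bigcup_j C_j$ clusters at $x_0$. The only detail worth making explicit (as the paper does) is that the final clustering of $C$ uses both that the collection $\{C_j\}_j$ clusters at $x_0$ and that each individual $C_j$ already lies in $\scrc$.
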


\begin{proof}
The theorem is clear for the constant loop at $\x$. Consider a non-constant loop $\alpha:\ui\to \Sigma X$ based at $\x$. Let $\scru$ be the set of connected components of $\ui\backslash \alpha^{-1}(\x)$ and note that if $J\in \scru$, then $\alpha|_{\ov{J}}$ is an irreducible loop. Moreover, the continuity of $\alpha$ ensures that if $O(\scrv,\eta)$ is a standard neighborhood, then $\alpha(\ov{J})\subseteq q(O(\scrv,\eta))$ for all but finitely many $J\in\scru$. 

Applying \Cref{simplelooplemma} to each $\alpha|_{\ov{J}}$, we obtain $C_J\in\scrc$ and a surjective loop $\beta_{J}:\ov{J}\to \Sigma C_{J}$, which is path-homotopic to $\alpha|_{\ov{J}}$ in $\Sigma X$. Let $G_{J}:\ov{J}\times \ui\to \Sigma X$ be such a homotopy (constructed as in the proof of \Cref{simplelooplemma}) from $\alpha|_{\ov{J}}$ to $\beta_J$ so that if $\im(\alpha|_{\ov{J}})$ lies in standard neighborhood $q( O(\scrv,\eta))$, then $\im(G_J)\subseteq q( O(\scrv,\eta))$.

Define $G:\ui^2\to \Sigma X$ so that $G(\alpha^{-1}(\x)\times \ui)=\x$ and so that for every $J\in\scru$, $G$ agrees with $G_{J}$ on $\ov{J}\times \ui$. Given a standard neighborhood $ O(\scrv,\eta)$, we have  $\alpha(\ov{J})\subseteq q(O(\scrv,\eta))$ for all but finitely many $J\in\scru$ and thus $\im(G_J)\subseteq q( O(\scrv,\eta))$ for all but finitely many $J\in\scru$. The continuity of $G$ is a direct consequence of this observation. Note that if $\im(\alpha)\subseteq q( O(\scrv,\eta))$, then our choice of $G_J$ ensures that $\im(G)\subseteq q( O(\scrv,\eta))$. Thus the second statement of the theorem is established.

Set $\beta(t)=G(t,1)$ and $C=\bigcup_{J\in\scru}C_J$. Since $\im(\beta_J)= \Sigma C_J$, it's clear that $\im(\beta)= \Sigma C$. It suffices to check that $C\in\scrc$, i.e. that $C$ clusters at $x_0$. Let $V$ be a neighborhood of $x_0$ in $X$ and consider the standard neighborhood $W=V\times\ui\cup (X\times [0,1/3)\cup (2/3,1])$. Then $\Sigma C_J=\im(\beta_J)\subseteq \im(G_J)\subseteq q(W)$ for all but finitely many $J\in\scru$. Thus $C_J\subseteq V$ for all but finitely many $J\in\scru$. Since each set $C_J$ clusters at $x_0$, it follows that $C$ clusters at $x_0$.
\end{proof}

\begin{remark}\label{structureofbeta}
We take a moment to highlight an important feature of the maps constructed in the proofs of \Cref{simplelooplemma} and \Cref{standardform}, which will be referred to later on. Starting with an arbitrary loop $\alpha$, the conclusion of \Cref{standardform} produces a set $C\in\scrc$, a loop $\beta$, and a homotopy $G$ from $\alpha$ to $\beta$. The constructions used ensure that for each component $(a,b)$ of $\ui\backslash \beta^{-1}(\x)$, we have $\beta|_{[a,b]}\equiv \lambda_{x}^{\epsilon}$ for some $x\in C$ and $\epsilon\in\{\pm\}$.
\end{remark}

\begin{remark}
Another interpretation of \Cref{standardform} is the following: for each $C\in\scrc$, let $f_C:\Sigma C\to \Sigma X$ be the inclusion map. Then \[\pi_1(\Sigma X,\x)=\bigcup_{C\in\scrc}(f_C)_{\#}(\pi_1(\Sigma C,\x)),\]that is, every element of $\pi_1(\Sigma X,\x)$ lies in the homomorphic image of a finitely generated free group or the infinite earring group where the homomorphism is induced by an inclusion map.
\end{remark}

Toward a characterization of the based spaces $(X,x_0)$ for which $\Sigma X$ is simply connected, we give the following definition.

\begin{definition}
A space $X$ is \textit{sequentially $0$-connected} at $x_0\in X$ if for every convergent sequence $\{x_k\}_{k\in\bbn}\to x_0$, there exists a sequence $\{\alpha_k\}_{k\in\bbn}$ of paths in $X$ that converges toward $x_0$ and such that $\alpha_k(0)=x_0$ and $\alpha_k(1)=x_k$ for all $k\in\bbn$.
\end{definition}

Certainly, if a space $X$ is first countable and locally path connected at $x_0$, then $X$ is sequentially $0$-connected at $x_0$.

\begin{lemma}
For any based space $(X,x_0)$, $\Sigma X$ is sequentially $0$-connected at $\x$.
\end{lemma}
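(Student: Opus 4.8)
The plan is to join each term of the given convergent sequence to $\x$ by a suitably chosen restriction of an adjoint path, where the choice of direction (downward versus upward) is dictated by the height coordinate of the term.

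First I would set up notation. Let $\{z_k\}_{k\in\bbn}\to\x$ be a convergent sequence in $\Sigma X$. For those indices $k$ with $z_k=\x$, set $\alpha_k=c_{\x}$. For the remaining $k$, use the identification of $\Sigma X\setminus\{\x\}$ with $(X\setminus\{x_0\})\times(0,1)$ to write $z_k=q(x_k,t_k)$ uniquely with $x_k\in X\setminus\{x_0\}$ and $t_k\in(0,1)$; in particular each such $z_k\ne\x$ automatically. Then define $\alpha_k$ to be the linear reparametrization of $\lambda_{x_k}|_{[0,t_k]}$ when $t_k\le 1/2$, and the linear reparametrization of $(\lambda_{x_k}|_{[t_k,1]})^{-}$ when $t_k>1/2$. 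In either case $\alpha_k(0)=q(x_k,0)=\x$ and $\alpha_k(1)=q(x_k,t_k)=z_k$, and moreover $\im(\alpha_k)=q(\{x_k\}\times[0,t_k])=\dh(\{z_k\})$ in the first case and $\im(\alpha_k)=q(\{x_k\}\times[t_k,1])=\uh(\{z_k\})$ in the second.

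Next I would verify that $\{\alpha_k\}_{k\in\bbn}$ converges toward $\x$. Since the standard neighborhoods of $\x$ form a neighborhood base, it suffices to fix an arbitrary standard neighborhood $q(O(\scrv,\eta))$ and show $\im(\alpha_k)\subseteq q(O(\scrv,\eta))$ for all but finitely many $k$. As $z_k\to\x$, all but finitely many $z_k$ lie in $q(O(\scrv,\eta))$; fix such a $k$ with $z_k\ne\x$, so $(x_k,t_k)\in O(\scrv,\eta)$. Here I would invoke the bound $\eta(x)<1/3$ that is part of the definition of a standard neighborhood. If $t_k\le 1/2$, then membership $(x_k,t_k)\in O(\scrv,\eta)$ forces either $x_k\in V_{x_0}$ or $t_k<\eta(x)$ for some $x$ with $x_k\in V_x$, since the remaining alternative $t_k>1-\eta(x)>2/3$ is incompatible with $t_k\le 1/2$; hence $(x_k,t_k)\in O(\scrv,\eta)^{\down}$. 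Symmetrically, if $t_k>1/2$ then $t_k<\eta(x)<1/3$ is impossible, so $(x_k,t_k)\in O(\scrv,\eta)^{\up}$. Applying the observation recorded in the remark following the definition of downward and upward hulls, $\dh(\{z_k\})\subseteq q(O(\scrv,\eta)^{\down})\subseteq q(O(\scrv,\eta))$ in the first case and $\uh(\{z_k\})\subseteq q(O(\scrv,\eta)^{\up})\subseteq q(O(\scrv,\eta))$ in the second. Either way $\im(\alpha_k)\subseteq q(O(\scrv,\eta))$, which completes the argument.

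I do not expect a serious obstacle here; the one point that must not be overlooked is the up/down dichotomy. A term $z_k=q(x_k,t_k)$ sitting high on the cylinder $\{x_k\}\times[0,1]$ cannot in general be joined to $\x$ inside a thin standard neighborhood by the downward segment, because that segment passes through the middle band, which the neighborhood may exclude over $x_k$; joining it instead by the upward segment keeps the path inside $O(\scrv,\eta)^{\up}$. Splitting at height $1/2$ — any fixed cutoff in $(1/3,2/3)$ would serve equally well — lets both cases go through using only the bound $\eta<1/3$.
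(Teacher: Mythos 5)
Your proposal is correct and follows essentially the same route as the paper: join each term $q(x_k,t_k)$ to $\x$ by the downward adjoint segment when $t_k\le 1/2$ and the upward one when $t_k>1/2$, then verify convergence toward $\x$ via standard neighborhoods and the containments $\dh(q(O(\scrv,\eta)^{\down}))=q(O(\scrv,\eta)^{\down})$ and $\uh(q(O(\scrv,\eta)^{\up}))=q(O(\scrv,\eta)^{\up})$. Your explicit observation that $\eta<1/3$ rules out the wrong alternative in each case is exactly the point the paper's case analysis relies on.
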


\begin{proof}
Suppose $\{y_k\}_{k\in\bbn}\to\x$ in $\Sigma X$. We may assume that $y_k\neq \x$ and thus $y_k=(x_k,t_k)\in X\times\ui\backslash\beth$ for all $k$. If $0<t_k\leq 1/2$, let $\beta_k\equiv  \lambda_{x_k}|_{[0,t_k]}$ and if $1/2<t_k<1$, let $\beta_k\equiv  \lambda_{x_k}|_{[t_k,1]}^{-}$. Then $\{\beta_k\}_{k\in\bbn}$ is a sequence of paths in $\Sigma X$ from $\x$ to $y_k$. We check that $\{\beta_k\}_{k\in\bbn}$ converges toward $\x$. Let $O(\scrv,\eta)$ be a standard neighborhood of $\beth$ so that $q(O(\scrv,\eta))$ is a basic neighborhood of $\x$ in $\Sigma X$. Then there exists $K$ such that $(x_k,t_k)\in O(\scrv,\eta)$ for all $k\geq K$. Fix $k\geq K$. If $(x_k,t_k)\in V_{x_0}\times [0,1]$, then it is clear that $\im(\beta_k)\subseteq q(O(\scrv,\eta))$. Otherwise, $(x_k,t_k)\in V_x\times [0,\eta(x))\cup (1-\eta(x),1]$ for some $x\in X$. If $ (x_k,t_k)\in V_x\times [0,\eta(x))$, then $\im(\beta_k)=\dh(q(x_k,t_k))\subseteq q(O(\scrv,\eta))$ and if $ (x_k,t_k)\in V_x\times (1-\eta(x),1]$, then $\im(\beta_k)=\uh(q(x_k,t_k))\subseteq q(O(\scrv,\eta))$. Thus $\im(\beta_k)\subseteq q (O(\scrv,\eta))$ for all $k\geq K$. 
\end{proof}

We now show that if $X$ is sequentially $0$-connected at $x_0$, then $\Sigma X$ is simply connected in a strong sense ``at $x_0$."

\begin{definition}
A space $X$ is \textit{sequentially $1$-connected} at $x_0\in X$ if $X$ is sequentially $0$-connected and if every sequence of loops $f_n:(S^1,\ast)\to (X,x_0)$, $n\in\bbn$ that converges toward $x_0$, there exists a sequence of maps $g_n:D^2\to X$, $n\in\bbn$ on the closed unit disk that converges toward $x_0$ and such that $g_{n}|_{S^1}=f_n$ for every $n\in\bbn$.
\end{definition}

\begin{proposition}\label{seq-1-connected}
If $X$ is sequentially $0$-connected at $x_0$, then $\Sigma X$ is sequentially $1$-connected at $\x$. In particular, $\pi_1(\Sigma X,\x)$ is trivial.
\end{proposition}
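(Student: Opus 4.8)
The plan is to combine \Cref{standardform} with an explicit filling construction. \Cref{standardform} lets me replace any loop, or any member of a sequence of loops, by a standard-form loop $\prod_{m}\lambda_{y_m}^{\epsilon_m}$ in some $\Sigma C$ with $C\in\scrc$ (\Cref{structureofbeta}), and I plan to nullhomotope such loops using the paths supplied by sequential $0$-connectedness. The building block I will use is: for a path $\delta:\ui\to X$ from $x_0$ to a point $y$, the map $N_\delta:\ui\times\ui\to\Sigma X$ given by $N_\delta(s,t)=q(\delta(1-t),s)$ is a based nullhomotopy of the adjoint loop $\lambda_y$ with $\im(N_\delta)=q(\im(\delta)\times\ui)$. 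Given a standard-form loop $\beta\equiv\prod_{m\in M}\lambda_{y_m}^{\epsilon_m}$ in $\Sigma C$ and a family $\{\delta_m\}_{m\in M}$ of paths clustering at $x_0$ with $\delta_m$ running from $x_0$ to $y_m$, I will assemble the maps $N_{\delta_m}$ (reversed in the first coordinate when $\epsilon_m=-$) over the subintervals of $\ui$ defining the concatenation $\beta$, sending the remaining ``seam'' to $\x$; this should yield a based nullhomotopy $H$ of $\beta$ with $\im(H)\subseteq\bigcup_{m\in M}q(\im(\delta_m)\times\ui)$. Continuity of $H$ along the seam is the one nontrivial check, and I expect it to follow from clustering of $\{\delta_m\}$, which forces all but finitely many strips $N_{\delta_m}$ into any prescribed standard neighborhood of $\x$.

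For the sequential assertion, I would start with $X$ sequentially $0$-connected at $x_0$ and a sequence $f_n:(S^1,\ast)\to(\Sigma X,\x)$, $\nin$, converging toward $\x$. Viewing each $f_n$ as a based loop $\ui\to\Sigma X$ and applying \Cref{standardform} gives $C_n\in\scrc$, a surjective standard-form loop $\beta_n:\ui\to\Sigma C_n$, and a based homotopy $G_n$ from $f_n$ to $\beta_n$ confined to every standard neighborhood containing $\im(f_n)$; so $\{\beta_n\}$ and $\{G_n\}$ still converge toward $\x$. I then set $D=\{x_0\}\cup\bigcup_n(C_n\setminus\{x_0\})$ and observe that $D$ clusters at $x_0$: for a neighborhood $V$ of $x_0$, convergence of $\{\beta_n\}$ gives $C_n\subseteq V$ for all but finitely many $n$ (note $\im(\beta_n)=\Sigma C_n$), and each remaining $C_n\in\scrc$ has only finitely many points outside $V$, so $D\setminus V$ is finite. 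Enumerating $D\setminus\{x_0\}=\{z_k\}_k$ and applying sequential $0$-connectedness, I obtain paths $\delta_k$ from $x_0$ to $z_k$ with $\{\delta_k\}_k$ converging toward $x_0$. For each $n$ I write the generators of $\beta_n$ as $\lambda_{z_k}^{\epsilon}$, use the first paragraph with the $\delta_k$ to build a based nullhomotopy $H_n$ of $\beta_n$, and glue $G_n$ to $H_n$ along $\beta_n$ to obtain $g_n:D^2\to\Sigma X$ with $g_n|_{S^1}=f_n$ and $\im(g_n)=\im(G_n)\cup\im(H_n)$.

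The main obstacle I foresee is verifying that $\{g_n\}$ converges toward $\x$; since $\{G_n\}$ already does, everything reduces to showing $\{H_n\}$ does. Here I would argue: given a neighborhood $V$ of $x_0$ with associated standard neighborhood $q(W)$, $W=V\times\ui\cup(X\times([0,1/3)\cup(2/3,1]))$, choose $K$ with $\im(\delta_k)\subseteq V$ for all $k\geq K$. Each of the finitely many points $z_1,\dots,z_{K-1}$ differs from $x_0$, hence---using that $X$ is Hausdorff and that each $C_n$ is eventually contained in any prescribed neighborhood of $x_0$---lies in $C_n$ for only finitely many $n$. Thus for all but finitely many $n$ every generator of $\beta_n$ has index $k\geq K$, so $\im(H_n)\subseteq q(V\times\ui)\subseteq q(W)$; this is exactly convergence of $\{H_n\}$ toward $\x$, completing the proof that $\Sigma X$ is sequentially $1$-connected at $\x$. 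For the final sentence, I would treat an arbitrary based loop $f:\ui\to\Sigma X$ directly: \Cref{standardform} gives $f\simeq\beta\equiv\prod_m\lambda_{y_m}^{\epsilon_m}$ in some $\Sigma C$, $C\in\scrc$; enumerating $C\setminus\{x_0\}$ and invoking sequential $0$-connectedness produces paths from $x_0$ to the points $y_m$ that cluster at $x_0$, so the construction of the first paragraph nullhomotopes $\beta$, hence $f$, and $\pi_1(\Sigma X,\x)$ is trivial. Besides the convergence bookkeeping above, the only routine-but-necessary checks are the seam-continuity of the assembled nullhomotopies and of the glued disks $g_n$, each of which again reduces to a clustering or ``converges toward $\x$'' hypothesis already established.
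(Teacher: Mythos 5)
Your proposal is correct, and its skeleton matches the paper's: reduce each loop to a standard-form concatenation of adjoint loops via \Cref{standardform}, use sequential $0$-connectedness to obtain paths to the relevant points that cluster at $x_0$, and then slide each adjoint loop to the basepoint along its path. The difference is in how the contraction is packaged. You build the nullhomotopy by hand, assembling the strips $N_{\delta}(s,t)=q(\delta(1-t),s)$ over the intervals of the concatenation and then checking seam continuity and the ``converges toward $\x$'' property of the resulting family $\{H_n\}$ directly (your $K$-threshold argument, together with the observation that each $z_j\neq x_0$ lies in $C_n$ for only finitely many $n$, is the right bookkeeping; one small point you leave implicit is that the family of paths indexed by the \emph{factors} of $\beta_n$ clusters at $x_0$, which needs the fact that a fixed $z_k\neq x_0$ can occur as a factor of $\beta_n$ only finitely often --- this follows from continuity of $\beta_n$ and $X$ being $T_1$). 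The paper instead threads the paths $\gamma_c$ into a single path $\delta_n:\ui\to X$ with $\delta_n(1/k)=c_{n,k}$ and factors $\alpha_n=\Sigma\delta_n\circ\Sigma\theta_n\circ\alpha_n'$ through the contractible space $\Sigma I$; this buys a cleaner argument in which the seam-continuity and convergence checks are absorbed into the single statement that $\{\delta_n\}$ converges toward $x_0$, at the cost of the slightly less transparent embedding $\theta_n$. Your version is more elementary and makes the actual homotopy explicit; both rest on the same two inputs and are equally valid.
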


\begin{proof}
Suppose that $\{\alpha_n\}_{n\in\bbn}$ is a sequence of based loops in $\Sigma X$ that converges toward $\x$. By \Cref{standardform}, for each $n\in\bbn$, we may assume there is some $C_n\in\scrc$ such that $\im(\alpha_n)=\Sigma C_n$. Moreover, by the second statement of \Cref{standardform}, we may assume that the collection $\{C_n\mid n \in\bbn\}$ clusters at $x_0$. Thus if we set $C=\bigcup_{n\in\bbn}C_n$, we have that $C\in\scrc$. Since $X$ is sequentially $0$-connected at $x_0$, for each $c\in C$, there exists a path $\gamma_c$ from $x_0$ to $c$ such that the collection $\{\gamma_c\mid c\in C\}$ clusters at $x_0$. Fix an enumeration $C_n\backslash\{x_0\}= \{c_{n,1}, c_{n, 2}, \dots \}$ of the (possibly finite) set $C_n\backslash\{x_0\}$. Using the paths $\gamma_c$, we may construct paths $\delta_n:\ui\to X$ such that $\delta_n(0)=x_0$, $\delta_n(1/k)=c_{n,k}$ for all values of $k$ used in the enumeration of $C_n\backslash\{x_0\}$, and such that the sequence $\{\delta_n\}_{n\in\bbn}$ converges toward $x_0$. The map $\theta_n:C_n\to \ui$, $\theta_n(c_{n,k})=1/k$, $\theta_n(x_0)=0$ is an embedding and gives factorization $\alpha_n=\Sigma\delta_n\circ \Sigma\theta_n\circ\alpha'_n$ through the contractible space $\Sigma I$, in which $\alpha_n':I\to \Sigma C_n$ is the map agreeing with $\alpha_n$ under the inclusion $i_n:\Sigma C_n\to \Sigma X$.
\[\xymatrix{
I \ar[d]_-{\alpha'_n} \ar[r]^-{\alpha_n} & \Sigma X \\ 
\Sigma C_n \ar@{^{(}->}[ur]_{i_n} \ar[r]_{\Sigma\theta_n} & \Sigma I \ar[u]_{\Sigma\delta_n}
}\]
Thus the loops $\alpha_n$, $n\in\bbn$ contract by a sequence of null-homotopies that converge toward $\x$.
\end{proof}

\section{Fundamental groups of reduced suspensions are locally free}

We remind the reader that, throughout this paper, the space $X$ is assumed to be Hausdorff. Recall that a \textit{Peano continuum} is a connected compact metric space that is locally connected. Toward a proof of \Cref{mainthm}, we give the following definition. 

\begin{definition}
For a space $X$ with basepoint $x_0$, let $\scrp(X,x_0)$ denote the set of all subsets $A\subseteq X$ containing $x_0$ and which satisfy:
\begin{enumerate}
\item $A$ has countably many path components
\item each path component of $A$ is a Peano continuum
\item the path components of $A$ cluster at $x_0$.
\end{enumerate}
If it will not cause confusion, we may write $\scrp$ instead of $\scrp(X,x_0)$.
\end{definition}

While $\scrp$ evidently becomes a partial order under subset inclusion ($A\leq B$ if $A\subseteq B$), it is not immediately clear if $\scrp$ is directed with respect to this relation. We recall and establish some general topology results, which will help us verify that $\scrp$ is directed as well as aid in the subsequent applications of $\scrp$. 

\begin{theorem}[Hahn-Mazurkiewicz theorem]
A space $X$ is a Peano continuum if and only if it is Hausdorff and there exists a continuous surjection $[0,1]\to X$.
\end{theorem}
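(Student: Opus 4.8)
This is the classical Hahn--Mazurkiewicz theorem; the plan is to prove the two implications separately, and one could alternatively just cite a standard reference.

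For the easy direction --- a continuous surjection $f\colon[0,1]\to X$ onto a Hausdorff space $X$ forces $X$ to be a Peano continuum --- I would first note that $X=f([0,1])$ is compact and connected. Since $f$ maps a compact space onto a Hausdorff space it is closed, hence a quotient map, so local connectedness of $[0,1]$ passes to $X$ (the preimage of an open set has open connected components, each mapping into one component of that set, which forces those components open). For metrizability I would argue that precomposition by $f$ embeds $C(X;\bbr)$ isometrically into the separable space $C([0,1];\bbr)$ (using surjectivity of $f$), so $C(X;\bbr)$ is separable, and then a countable point-separating subfamily of $C(X;[0,1])$ embeds $X$ into the Hilbert cube $[0,1]^{\bbn}$; thus $X$ is a connected, compact, locally connected metric space.

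For the hard direction --- a Peano continuum $X$ admits a continuous surjection from $[0,1]$ --- the plan has two ingredients. First I would establish (or cite) the standard fact that a Peano continuum with metric $d$ is \emph{uniformly locally path connected}: for every $\varepsilon>0$ there is $\delta>0$ so that any two points at distance $<\delta$ are joined by a path of image-diameter $<\varepsilon$; this follows from local connectedness together with the theorem that connected, locally connected, completely metrizable spaces are path connected, applied to small connected open sets. Second, using that every nonempty compact metric space is a continuous image of the Cantor set $\mathcal{C}\subseteq[0,1]$, fix a continuous surjection $g\colon\mathcal{C}\to X$, write $[0,1]\setminus\mathcal{C}=\bigsqcup_i(a_i,b_i)$, and --- combining uniform continuity of $g$ with uniform local path connectedness of $X$ --- choose paths $p_i\colon[a_i,b_i]\to X$ from $g(a_i)$ to $g(b_i)$ whose image-diameters tend to $0$ with the gap lengths (for each $\varepsilon>0$, all but finitely many $p_i$ have diameter $<\varepsilon$). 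Setting $f|_{\mathcal{C}}=g$ and $f|_{[a_i,b_i]}=p_i$ gives a well-defined surjection (the pieces agree at the shared endpoints $a_i,b_i\in\mathcal{C}$); the remaining task is to verify continuity of $f$ at points $c\in\mathcal{C}$, for which, given $\varepsilon>0$, I would choose $\delta$ controlling both the oscillation of $g$ near $c$ and the diameters of the $p_i$ over short gaps, and then take a neighborhood of $c$ meeting only gaps that are either short or have an endpoint near $c$, using continuity of each $p_i$ at its endpoints.

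I expect the main obstacle to be the hard direction: both the preliminary uniform local path-connectedness (which rests on the nontrivial fact that connected, locally connected metric continua are arc connected) and the diameter bookkeeping in the gap-filling step needed to make $f$ continuous across the accumulating gaps of the Cantor set.
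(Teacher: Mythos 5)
The paper does not prove this statement: it is quoted as the classical Hahn--Mazurkiewicz theorem and used as a black box (the text immediately draws consequences such as local path connectedness of Peano continua), so there is no in-paper argument to compare yours against. Your outline is the standard textbook proof and is essentially correct. In the easy direction, compactness, connectedness, the quotient-map argument for local connectedness, and the separability-of-$C(X;\bbr)$ route to metrizability are all sound (the last step needs the remark that a countable dense subset of $C(X;\bbr)$ still separates points, which you implicitly use, and that a continuous injection of a compact space into $[0,1]^{\bbn}$ is an embedding). In the hard direction you correctly isolate the two nontrivial inputs: the Alexandroff--Hausdorff surjection from the Cantor set, and uniform local path connectedness, whose proof does indeed lean on the arcwise connectivity theorem for connected, locally connected, completely metrizable spaces applied to small connected open sets, together with a Lebesgue-number argument for uniformity. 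Two small points to make explicit if you write this out in full: the finitely many ``long'' gaps must be filled using global path connectedness of $X$ (itself a consequence of the same arcwise connectivity theorem), and the continuity check at a point $c$ of the Cantor set should treat separately the case where $c$ is an endpoint of a gap (where you use continuity of that single $p_i$ at its endpoint) and the case where $c$ is approached by infinitely many short gaps (where the diameter bound does the work). With those caveats the plan goes through.
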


The Hahn-Mazurkiewicz theorem implies that Peano continua are locally path connected, since quotients of locally path-connected spaces are locally path connected. Another direct consequence is the following lemma.

\begin{lemma}\label{fin-union}
Let $\{A_i\}_{i\leq n}$ be a finite collection of subsets of $X$, each of which is a Peano continuum. If $\bigcup_{i\leq n}A_i$ is path connected, then it is a Peano continuum.
\end{lemma}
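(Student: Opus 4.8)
The plan is to invoke the Hahn--Mazurkiewicz theorem in both directions: first use it to parametrize each $A_i$, then splice those parametrizations together with connecting paths to manufacture a single continuous surjection from $[0,1]$ onto $Y:=\bigcup_{i\le n}A_i$, and finally apply the theorem again to conclude that $Y$ is a Peano continuum.

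First I would record the two Hausdorff observations: $Y$ is Hausdorff, being a subspace of the Hausdorff space $X$, and likewise each $A_i$ is Hausdorff. Since each $A_i$ is by hypothesis a Peano continuum, the Hahn--Mazurkiewicz theorem supplies continuous surjections $f_i:[0,1]\to A_i$ for $1\le i\le n$. Next, using that $Y$ is path connected, for each $i\in\{1,\dots,n-1\}$ I would choose a path $p_i$ in $Y$ from $f_i(1)$ to $f_{i+1}(0)$. The concatenation
$$g=f_1\cdot p_1\cdot f_2\cdot p_2\cdots p_{n-1}\cdot f_n,$$
suitably reparametrized to have domain $[0,1]$, is then a continuous path in $Y$ whose image contains $\im(f_i)=A_i$ for every $i$ and is contained in $Y$, hence equals $Y$. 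Thus $g:[0,1]\to Y$ is a continuous surjection onto the Hausdorff space $Y$, and the Hahn--Mazurkiewicz theorem yields that $Y$ is a Peano continuum.

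I expect there to be essentially no serious obstacle here: the statement is flagged in the text as a direct consequence of Hahn--Mazurkiewicz, and the proof is a short splicing argument. The only points requiring (minor) care are verifying that $Y$ inherits the Hausdorff property from $X$, which is what licenses the reverse application of the theorem, and doing the bookkeeping of the finite concatenation so that the endpoints match up.

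As a remark, one could instead avoid the connecting paths altogether: since $Y$ is connected and each $A_i$ is closed in $X$, the intersection graph of the $A_i$ is connected, so after reindexing one may assume $A_k\cap(A_1\cup\cdots\cup A_{k-1})\neq\varnothing$ for each $k\ge 2$, and then build the surjection inductively by inserting, at a parameter mapping to a point of $A_k\cap(A_1\cup\cdots\cup A_{k-1})$, a loop that traverses all of $A_k$. This works equally well, but the version using path connectedness directly is cleaner and is the one I would write up.
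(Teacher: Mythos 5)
Your proof is correct and is precisely the argument the paper intends: the lemma is stated there as a direct consequence of the Hahn--Mazurkiewicz theorem with no further detail, and your splicing of the Peano parametrizations $f_i$ by connecting paths into a single continuous surjection $[0,1]\to\bigcup_{i\le n}A_i$, followed by the reverse application of Hahn--Mazurkiewicz, fills in exactly that gap. No issues.
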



We also require the following fact, which applies more generally.

\begin{lemma}\label{loc-connected}
Let $C_1, C_2, \dots, C_n$ be a finite collection of closed subsets of a space $X$. If $C_i$ is locally connected for each $i\leq n$, then so is the union $\bigcup_{i\leq n}C_i$.
\end{lemma}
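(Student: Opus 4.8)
The plan is to reduce to the case $n=2$ by an obvious induction on the number of sets (if $\bigcup_{i\le n}C_i = \bigcup_{i\le n-1}C_i \cup C_n$, and each $C_i$ is closed, then $\bigcup_{i\le n-1}C_i$ is a closed locally connected set by the inductive hypothesis, and we apply the two-set case). So it suffices to show: if $C_1,C_2$ are closed and locally connected in $X$, then $C_1\cup C_2$ is locally connected. Local connectedness is a local property, so I would fix a point $x\in C_1\cup C_2$ and an open neighborhood $W$ of $x$ in $C_1\cup C_2$, and produce a connected open neighborhood of $x$ inside $W$.

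First I would dispose of the easy case: if $x\notin C_1$ (the symmetric case $x\notin C_2$ being identical), then since $C_1$ is closed, $U=(C_1\cup C_2)\setminus C_1$ is an open neighborhood of $x$ contained entirely in $C_2$; intersecting $W$ with $U$ gives an open subset of the locally connected space $C_2$, so it contains a connected open (in $C_2$, hence in $C_1\cup C_2$) neighborhood of $x$, and we are done. The remaining case is $x\in C_1\cap C_2$. Here I would use local connectedness of $C_1$ and of $C_2$ separately: choose a connected open neighborhood $V_1$ of $x$ in $C_1$ with $V_1\subseteq W\cap C_1$, and a connected open neighborhood $V_2$ of $x$ in $C_2$ with $V_2\subseteq W\cap C_2$. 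The natural candidate is $V_1\cup V_2$: it is connected (both pieces are connected and share the point $x$) and contained in $W$.

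The main obstacle — and the only real point of the argument — is that $V_1\cup V_2$ need not be open in $C_1\cup C_2$, because $V_1$, although open in $C_1$, may fail to be open in $C_1\cup C_2$ at points of $C_2\cap V_1$, and symmetrically for $V_2$. To fix this I would write $V_i = G_i\cap C_i$ for $G_i$ open in $X$, and observe that the set
\[
N = \bigl(G_1\cap G_2\bigr)\cap (C_1\cup C_2)
\]
is open in $C_1\cup C_2$, contains $x$, and is contained in $V_1\cup V_2$ (a point of $N$ lying in $C_1$ lies in $G_1\cap C_1=V_1$, and a point lying in $C_2$ lies in $G_2\cap C_2=V_2$). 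Now $N$ is an open neighborhood of $x$; it may not be connected, but local connectedness is equivalent to the components of open sets being open, so I would instead take the connected component $N_0$ of $x$ in $N$. Here closedness enters crucially a second time: I claim $N_0$ is open in $C_1\cup C_2$. Indeed $N_0\cap C_1$ is open in $C_1$ (it is a union of components of the open set $N\cap C_1$ of the locally connected space $C_1$ — namely those components meeting $N_0$ — using that such components of an open subset of a locally connected space are open), and likewise $N_0\cap C_2$ is open in $C_2$; since $C_1,C_2$ are closed with union $C_1\cup C_2$, a set whose trace on each $C_i$ is open in $C_i$ is open in $C_1\cup C_2$. Thus $N_0$ is a connected open neighborhood of $x$ inside $W$, completing the two-set case and hence, by induction, the lemma.

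One should double-check the sub-claim used above — that if $U$ is open in a locally connected space $Y$ and $Z\subseteq U$ is a union of connected components of $U$, then $Z$ is open in $Y$ — but this is immediate since each component of $U$ is open in $Y$ by local connectedness. I expect the write-up to be short; the subtlety is purely in handling the topology of the union $C_1\cup C_2$ correctly, which is exactly where the closedness hypothesis is consumed.
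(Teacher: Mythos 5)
Your proof is correct, but it takes a different route from the paper's. The paper disposes of the lemma in three lines: the disjoint union $\coprod_{i\leq n}C_i$ is locally connected, the natural map $\coprod_{i\leq n}C_i\to\bigcup_{i\leq n}C_i$ is a closed continuous surjection (closedness of each $C_i$ and finiteness of the collection make images of closed sets closed), hence a quotient map, and quotients of locally connected spaces are locally connected. You instead give a direct point-set argument: reduce to two sets by induction, handle the point $x\in C_1\cap C_2$ by intersecting ambient open sets $G_1\cap G_2$ to get an open neighborhood $N$ of $x$ inside $V_1\cup V_2$, and then pass to the component $N_0$ of $x$ in $N$, verifying its openness via the pasting lemma for the closed cover $\{C_1,C_2\}$ together with the fact that $N_0\cap C_i$ is a union of (open) components of the open set $N\cap C_i$ in the locally connected space $C_i$. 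All steps check out, and the closedness hypothesis is consumed in exactly the same two places in spirit (openness of complements, and gluing along a finite closed cover). The paper's argument buys brevity at the cost of citing two standard facts (closed surjections are quotient maps; quotients preserve local connectedness); yours is self-contained and makes visible where the hypothesis is used, at the cost of length and a case split. One tiny presentational point: your remark that $V_1\cup V_2$ is connected is never actually used, since you ultimately take the component $N_0$ of $N$, which is connected by definition; you could delete that sentence.
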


\begin{proof}
It is straightforward to see that the disjoint union $\coprod_{i\leq n}C_i$ is locally connected and that $\coprod_{i\leq n}C_i\to \bigcup_{i\leq n}C_i$ is a closed continuous surjection, hence a quotient map. The lemma then follows from the fact that the quotient of a locally connected space is locally connected.
\end{proof}

\begin{remark}\label{weakly-loc}
A space $X$ is \textit{weakly locally connected at $x\in X$} if there exists a basis of connected neighborhoods at $x\in X$, though these neighborhoods are not required to be open. If a space $X$ is locally connected at a point $x$, then it is clearly also weakly locally connected at $x$, however the converse does not always hold. While these properties are distinct at individual points, a space $X$ is weakly locally connected at all points $x\in X$ if and only if it is locally connected at all points $x\in X$ \cite[Theorem 2.5]{degrootmcdowell}.
\end{remark}

\begin{lemma}\label{connected}
Let $\scra=\{A_i\}_{i\in\bbn}$ be a countably infinite collection of subsets of $X$ which cluster at $x_0$, where each $A_i$ is a Peano continuum. If $\bigcup_{i\in\bbn}A_i$ is path connected, then $\bigcup_{i\in\bbn}A_i\cup \{x_0\}$ is a Peano continuum.
\end{lemma}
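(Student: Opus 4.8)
The plan is to check that $Y:=\{x_0\}\cup\bigcup_{i\in\bbn}A_i$ satisfies the four defining conditions of a Peano continuum: that it is a compact, connected, metrizable, locally connected Hausdorff space. Three of these are immediate. $Y$ is Hausdorff as a subspace of $X$. Given an open cover of $Y$, some member $U_0$ contains $x_0$, so by clustering all but finitely many $A_i$ lie in $U_0$ and $Y$ is the union of $U_0$ with finitely many of the compact sets $A_i$; covering each of those by finitely many members gives a finite subcover, so $Y$ is compact. And every neighborhood of $x_0$ contains some (nonempty) $A_i$, so $x_0$ lies in the closure of the path-connected set $\bigcup_iA_i$, which makes $Y$ connected.

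For metrizability I would use that $Y$ is compact Hausdorff, hence normal. The closed subsets $G_n:=\{x_0\}\cup\bigcup_{i\le n}A_i$ exhaust $Y$, and each $G_n$ is a continuous image of the compact metric space $\bigl(\coprod_{i\le n}A_i\bigr)\sqcup\{\ast\}$ and hence is itself compact and metrizable. Choosing for each $n$ a countable point-separating family of maps $G_n\to[0,1]$ and extending each to $Y$ by the Tietze extension theorem produces a countable point-separating family of maps $Y\to[0,1]$ (any two points of $Y$ lie in a common $G_n$), hence an embedding $Y\hookrightarrow[0,1]^{\bbn}$. Fix a compatible metric $d$ on $Y$.

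The substantive part is local connectedness, which I would obtain by showing: \emph{for every $\epsilon>0$ the space $Y$ is a union of finitely many connected subsets, each closed in $Y$ and of $d$-diameter $<\epsilon$.} Granting this, at any point the union of the finitely many pieces containing it is a connected neighborhood of diameter $<2\epsilon$, so $Y$ is weakly locally connected everywhere; by \Cref{weakly-loc} it is then locally connected, hence a Peano continuum. To build such a cover, fix $\epsilon$, pick $N$ with $A_i\subseteq B_d(x_0,\epsilon/7)$ for all $i>N$, and use that each $A_i$ with $i\le N$, being a Peano continuum and hence a continuous image of $[0,1]$, splits (by uniform continuity) into finitely many connected sets closed in $Y$ of diameter $<\epsilon/7$; this covers $F:=\bigcup_{i\le N}A_i$. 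Put $\widehat T:=\{x_0\}\cup\bigcup_{i>N}A_i$. A clustering argument identifies $\widehat T$ with $\overline{\bigcup_{i>N}A_i}$, shows it is closed in $Y$ of diameter $<\epsilon$, and shows that every connected component of $\widehat T$ other than the one containing $x_0$, call it $D_0$, is a \emph{finite} union of the $A_i$ with $i>N$ (an infinite union would force $x_0$ into that component). So the components of $\widehat T$ are $D_0$ together with pairwise disjoint connected sets $D_1,D_2,\dots$, each closed in $Y$ of diameter $<\epsilon$, which cluster at $x_0$.

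The key point, and the step I expect to be the main obstacle, is that each $D_l$ with $l\ge1$ meets $F$; this is where path-connectedness of $\bigcup_iA_i$ is used. If $\bigcup_iA_i\subseteq\widehat T$ then $Y=\widehat T$ is itself an admissible single piece and we are done; otherwise pick $b\in\bigcup_iA_i\setminus\widehat T$ and a path $p$ in $\bigcup_iA_i$ from a point of $D_l$ to $b$, and let $s$ be the largest parameter with $p([0,s])\subseteq\widehat T$ (such $s<1$ exists since $\widehat T$ is closed and $p(1)\notin\widehat T$). Then $p([0,s])$ is a connected subset of $\widehat T$ meeting $D_l$, so $p(s)\in D_l$; on the other hand $p(s)$ is a limit of points of $p$ lying outside $\widehat T$, so $p(s)\in\overline{Y\setminus\widehat T}\subseteq F$, the inclusion holding because $Y\setminus\widehat T\subseteq F$ and $F$ is closed in $Y$. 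Hence $D_l\cap F\neq\emptyset$. Given this, I would absorb each $D_l$ ($l\ge1$) into one of the finitely many closed pieces of $F$ that it meets, and keep $D_0$ as a separate piece. Each enlarged piece is still connected and of diameter $<\epsilon$, and still closed in $Y$: if infinitely many $D_l$ are attached to one piece $E$ of $F$, then the corresponding intersection points lie in the closed set $E$ and converge to $x_0$, so $x_0\in E$ and attaching the $D_l$ adds no new limit point. The resulting finite family of connected closed sets of diameter $<\epsilon$ covers $Y$, completing the argument.
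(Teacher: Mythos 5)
Your proof is correct, and while it follows the same unavoidable skeleton as the paper's --- verify that $Y=\{x_0\}\cup\bigcup_iA_i$ is compact, metrizable, connected, and locally connected, reducing the last to weak local connectedness via \Cref{weakly-loc} --- the two substantive steps are carried out by genuinely different means. For metrizability the paper simply exhibits $Y$ as a Hausdorff quotient of the shrinking wedge $\sw_{i\in\bbn}(A_i\cup\{x_0\},x_0)$, which sits as a closed subspace of $\prod_i(A_i\cup\{x_0\})$; your exhaustion-plus-Tietze embedding into $[0,1]^{\bbn}$ is longer but equally valid. For local connectedness the paper works pointwise: away from $x_0$ it applies \Cref{loc-connected} to the finitely many $A_i$ meeting a small neighborhood, and at $x_0$ it builds, inside a given open $U\ni x_0$, a (non-open) neighborhood $N$ and shows $N$ has finitely many components, path-connectedness of $\bigcup_iA_i$ forcing each component to meet one of finitely many exceptional sets. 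You instead establish the uniform ``property S'' statement that for every $\epsilon>0$ the space $Y$ is a finite union of closed connected pieces of diameter $<\epsilon$, which treats all points simultaneously; your first-exit-time argument showing each far component $D_l$ of $\widehat T$ meets the closed set $F$ is the precise analogue of the paper's use of path-connectedness, and your absorption step plays the role of the paper's finiteness-of-components argument. Your route buys uniformity (and quietly re-proves that Peano continua have property S), at the cost of more bookkeeping; the paper's is shorter because only the single point $x_0$ is genuinely problematic. One small presentational point: the identification of $\widehat T$ with $\overline{\bigcup_{i>N}A_i}$ can fail when $x_0$ lies in no $A_i$ and only finitely many $A_i$ with $i>N$ are nonempty, but all your argument actually needs is that $\widehat T$ is closed in $Y$, which holds in every case by the clustering argument you indicate.
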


\begin{proof}
Let $A = \bigcup_{i\in\bbn}A_i\cup \{x_0\}$. We begin by showing that $A$ is compact and metrizable. Let $B_i = A_i\cup\{x_0\}\subset X$ for each $i$. Each $B_i$ is compact and metrizable since $A_i$ is, and thus $\prod_{i\in\bbn}B_i$ is compact and metrizable. As a closed subset of $\prod_{i\in\bbn}B_i$, the shrinking wedge $\sw_{i\in\bbn} (B_i, x_0)$ is compact and metrizable as well. Finally, $A$ is a Hausdorff quotient of the compact metrizable space $\sw_{i\in\bbn} (B_i, x_0)$, hence compact and metrizable.

We have that $A$ is connected since every open set containing $x_0$ intersects $\bigcup_{i\in\bbn}A_i$, which is path connected by assumption. If $a\in A$ and $a\neq x_0$, then by taking disjoint neighborhoods $U$ and $V$ of $a$ and $x_0$ respectively and noting that $V$ contains all but finitely many $A_i\in\scra$, we see that $U$ intersects only finitely many $A_i\in\scra$. Let $B$ be the union of the finitely many $A_i$ that intersect $U$. Then $B$ is locally connected by \Cref{loc-connected} and thus has a basis of connected open neighborhoods at $a$. Since we may assume the neighborhoods are contained in $U$, we see that $A$ is locally connected at all points $a\neq x_0$.

We now show that $A$ is weakly locally connected at $x_0$, which by \Cref{weakly-loc} will finish showing that $A$ locally connected, and thus a Peano continuum. Let $U$ be an open neighborhood of $x_0$ in $A$. We partition $\scra$ into three sets. Let $\scrb = \{A_i\in\scra\mid A_i\subseteq U\}$, $\scrd = \{A_i\in \scra\mid x_0\in A_i, A_i\nsubseteq U\}$, and $\scre = \{A_i\in \scra\mid x_0\notin A_i, A_i\nsubseteq U\}$. Note that $\scrd$ and $\scre$ are finite sets. For each $D\in \scrd$, find a connected open neighborhood $U_D$ of $x_0$ in $D$ such that $U_D\subseteq U$. Let $N\subseteq U$ be the union 
$$N = \bigcup_{D\in\scrd}U_D\cup\bigcup \scrb\cup\{x_0\}.$$
Though $N$ need not be open, it is a neighborhood of $x_0$ because $C=\bigcup_{D\in\scrd}(D\setminus U_D)\cup \bigcup\scre$ is a closed subset of $A$ whose complement $V=A\backslash C$ is open in $A$ and satisfies $x_0\in V\subseteq N$.

We now show that $N$ must have only finitely many connected components, in which case the connected component $W$ of $N$ containing $x_0$ is open in $N$. Toward a contradiction, suppose that $N$ has infinitely many connected components. Each connected component of $N$ distinct from $W$ is a finite union of elements of $\scrb$. Since $\bigcup_{i\in\bbn}A_i$ is path connected, each such component must intersect an element of either $\scrd$ or $\scre$. Since both $\scrd$ and $\scre$ are finite sets, there exists $A_N\in \scrd\cup\scre$ such that the set $F= A_N\setminus\bigcup_{D\in\scrd} U_D$ intersects an infinite number of the components of $N$. However, $F$ is a closed set that does not contain $x_0$ yet has non-trivial intersection with every neighborhood of $x_0$; a contradiction.

Finally, since $W$ is open in $N$, we may write $W=N\cap W'$ for a set $W'$ that is open in $A$. Then $V\cap W'$ is open in $A$ and $x_0\in V\cap W'=(V\cap N)\cap W'=V\cap W\subseteq W$. Thus $W$ is a connected neighborhood (in $A$) of $x_0$ contained in $U$. We conclude that $A$ is weakly locally connected at $x_0$.
\end{proof}

\begin{proposition}\label{components-are-Peano}
Let $\scra=\{A_i\}_{i\in \mci}$ be a countable collection of subsets of $X$ which cluster at $x_0$, and where each $A_i$ is a Peano continuum. Each path component of $\bigcup_{i\in \mci} A_i\cup\{x_0\}$ is a Peano continuum.
\end{proposition}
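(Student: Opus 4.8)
Let $A = \bigcup_{i \in \mci} A_i \cup \{x_0\}$ and let $P$ be a path component of $A$. The goal is to realize $P$ (or $P \cup \{x_0\}$) as a space to which one of \Cref{fin-union}, \Cref{connected}, or a trivial case applies, depending on how the clustering collection $\scra$ interacts with $P$. First I would dispose of the bookkeeping: each $A_i$ is path connected (being a Peano continuum), so $A_i$ is entirely contained in a single path component of $A$. Hence $\mci$ partitions as $\mci = \mci_0 \sqcup \bigsqcup_{P} \mci_P$, where $A_i \subseteq P$ exactly when $i \in \mci_P$ (here $\mci_0$ collects those $i$, if any, for which $A_i = \{x_0\}$, which may meet several components — but $\{x_0\}$ contributes nothing, so I will simply discard such indices). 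Fix a path component $P$. If $x_0 \notin P$, then $P = \bigcup_{i \in \mci_P} A_i$ with $\mci_P$ finite: indeed, the $A_i$ with $i \in \mci_P$ all avoid some neighborhood of $x_0$ (else $x_0$ would be a limit of points of $P$, but $A$ is Hausdorff and $x_0 \notin P$ — more carefully, all but finitely many $A_i$ lie inside any prescribed neighborhood $V$ of $x_0$, so at most finitely many $A_i$ can avoid a fixed such $V$, and those avoiding a neighborhood disjoint from... ), so $\mci_P$ is finite and \Cref{fin-union} applies directly to give that $P$ is a Peano continuum.

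The remaining case is $x_0 \in P$. Here $P = \bigcup_{i \in \mci_P} A_i \cup \{x_0\}$ and $\bigcup_{i \in \mci_P} A_i$ is path connected (any two points of $P$ are joined by a path in $A$, and such a path lies in $P$; if neither endpoint is $x_0$ we may as well take the path to avoid... actually path-connectedness of $\bigcup_{i\in\mci_P} A_i$ needs a small argument, see below). If $\mci_P$ is finite, \Cref{fin-union} again finishes the job (noting $x_0 \in A_i$ for at least one such $i$, or adjusting). If $\mci_P$ is countably infinite, then since $\{A_i\}_{i \in \mci_P}$ is a subcollection of a collection clustering at $x_0$, it still clusters at $x_0$, and \Cref{connected} applies verbatim to conclude $P$ is a Peano continuum. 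So the structure of the proof is: reduce to one path component, split on whether $x_0$ lies in it and whether the relevant index set is finite, and invoke the two lemmas already proved.

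**The main obstacle.** The subtle point is the path-connectedness hypothesis needed to invoke \Cref{fin-union} and \Cref{connected}: I need $\bigcup_{i \in \mci_P} A_i$ (without adjoining $x_0$ in the infinite case, exactly $x_0$ in the finite case) to be path connected, not merely that $P$ is. The worry is a point $p \in P$ with $p \neq x_0$ lying in $\overline{A}$ but in no single $A_i$ — however this cannot happen: each $A_i$ is compact, hence closed in $X$ and in $A$, so $P \setminus \{x_0\} \subseteq \bigcup_{i} A_i$ follows once we know $P \setminus\{x_0\}$ is covered, and in fact $P = \bigcup_{i\in\mci_P} A_i \cup \{x_0\}$ holds by construction of the partition. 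For path-connectedness, take $p, p' \in \bigcup_{i \in \mci_P} A_i$; there is a path $\sigma$ in $P \subseteq A$ from $p$ to $p'$. If $\sigma$ never hits $x_0$ we are fine since then $\sigma$ stays in $\bigcup_{i\in\mci_P} A_i$. If it does pass through $x_0$, reparametrize: some $A_i$ contains $p$ and some $A_j$ contains $x_0$ (in the $x_0 \in P$ case), and since each $A_i$ is itself path connected we can route from $p$ to $x_0$ within $A_i \cup A_j$ provided $A_i \cup A_j$ is connected — which it is, because they both meet $P$... this is where one must be slightly careful, and the cleanest fix is: $\bigcup_{i \in \mci_P} A_i$ is connected (it is $P$ minus possibly the single point $x_0$, and removing a point from a Peano continuum... no). The genuinely clean argument: in the infinite case $x_0 \in A_{i_0}$ for at least one $i_0 \in \mci_P$ is \emph{not} guaranteed, so to get path-connectedness of $\bigcup_{i\in\mci_P}A_i$ when $x_0 \in P$ but $x_0$ lies in no $A_i$, I use that $P$ is path connected and $x_0$ is a limit point of $\bigcup_{i \in \mci_P} A_i$: a path in $P$ from $p$ to $p'$ can be pushed off $x_0$ because in a neighborhood of $x_0$ the space looks like a shrinking wedge of the finitely many relevant $A_i$ meeting that neighborhood. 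I expect the write-up to handle this by first proving $\bigcup_{i \in \mci_P} A_i$ is path connected as a short separate claim, then citing \Cref{fin-union} and \Cref{connected}. The rest is routine.
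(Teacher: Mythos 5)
Your reduction to a single path component and the case split on whether $x_0\in P$ and whether $\mci_P$ is finite matches the paper's strategy, but the step you yourself flag as the ``main obstacle'' is a genuine gap, and your proposed fix does not work. When $x_0\in P$, the set $\bigcup_{i\in\mci_P}A_i$ need \emph{not} be path connected, so \Cref{connected} cannot be applied to it directly. Concretely: in $\bbr^2$ let $x_0=(0,0)$, let $F_n$ be the subarc of $\{(t,t)\mid 0<t\le 1\}$ from $(1/n,1/n)$ to $(1/(n{+}1),1/(n{+}1))$, and let $G_n$ be the corresponding subarc of $\{(t,-t)\mid 0<t\le 1\}$. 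These are Peano continua clustering at $x_0$, and $A=\{x_0\}\cup\bigcup_n F_n\cup\bigcup_n G_n$ is the union of two arcs meeting only at $x_0$, so the path component of $x_0$ is all of $A$; yet $A\setminus\{x_0\}=\bigcup_n F_n\cup\bigcup_n G_n$ has two path components. Any path joining them must pass through $x_0$, so the path cannot be ``pushed off $x_0$'' as you suggest. The paper circumvents this with a two-stage argument: letting $\scrb=\{A_i\mid A_i\subseteq C\}$ where $C$ is the component of $x_0$, it first shows that each path component $P_j$ of $\bigcup\scrb$ either contains $x_0$ or contains infinitely many $A_i$, so that $P_j\cup\{x_0\}$ is a Peano continuum by \Cref{fin-union} or \Cref{connected}; it then applies those same lemmas a \emph{second} time to the collection $\{P_j\cup\{x_0\}\}_{j\in J}$, whose union is $C$ and is automatically path connected because every member contains $x_0$. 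Your argument is missing this second layer.

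A smaller but related defect: in the case $x_0\notin P$, your justification that $\mci_P$ is finite trails off and relies on ``$x_0$ would be a limit of points of $P$,'' which proves only $x_0\in\ov{P}$; path components need not be closed, so this is not a contradiction by itself. The correct argument (used in the paper) is again via \Cref{connected}: if infinitely many $A_i$ were contained in the path-connected set $P=\bigcup_{i\in\mci_P}A_i$, then $P\cup\{x_0\}$ would be a Peano continuum, hence path connected, forcing $x_0\in P$. This part of your proof is salvageable; the $x_0\in P$ case is not, without adopting the decomposition into path components of $\bigcup\scrb$ described above.
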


\begin{proof}
If $\scra$ is finite, then the proposition is clear from \Cref{fin-union} so we assume $\scra$ is infinite. Let $A=\bigcup_{i\in \mci} A_i\cup\{x_0\}$ and let $C$ be a path component of $A$. If infinitely many elements of $\scra$ are subsets of $C$, then \Cref{connected} implies that $C$ also contains $x_0$. So if $x_0\notin C$, then $C$ is the union of only finitely many $A_i\in \scra$, in which case \Cref{fin-union} implies that $C$ is a Peano continuum.

So now assume that $C$ is the path component of $A$ containing $x_0$. Let $\scrb=\{A_i\in\scra\mid A_i\subseteq C\}$. Note that a path component $P$ of $\bigcup \scrb$ either contains $x_0$ or contains infinitely many $A_i$. In either case, $P\cup\{x_0\}$ is a Peano continuum (using \Cref{fin-union} or \Cref{connected}). Thus if $\{P_j\}_{j\in J}$ is the set of path components of $\bigcup\scrb$, then $\{P_j\cup\{x_0\}\}_{j\in J}$ is a collection of Peano continua that clusters at $x_0$ and whose union is $C$. Finally, applying \Cref{fin-union} (when $J$ is finite) or \Cref{connected} (when $J$ is infinite) to the collection $\{P_j\cup\{x_0\}\}_{j\in J}$, it follows that $C$ is a Peano continuum.
\end{proof}

Let $\leq$ be the partial order on $\scrp$ defined by subset inclusion.

\begin{corollary}\label{directed}
$(\scrp, \leq)$ is directed set.
\end{corollary}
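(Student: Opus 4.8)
\medskip

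The plan is to show that $\scrp\neq\emptyset$ and that for any $A,B\in\scrp$ the union $A\cup B$ again lies in $\scrp$; since $A\subseteq A\cup B$ and $B\subseteq A\cup B$, this is exactly the assertion that $(\scrp,\leq)$ is directed. Nonemptiness is immediate: $\{x_0\}\in\scrp$, as it has a single path component, namely a point, which is a Peano continuum, and a one-element collection trivially clusters at $x_0$.

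So fix $A,B\in\scrp$ and let $\scra$ be the collection consisting of all path components of $A$ together with all path components of $B$. By condition (1) for $A$ and for $B$, the collection $\scra$ is countable; by condition (2), every member of $\scra$ is a Peano continuum; and by condition (3), the path components of $A$ cluster at $x_0$ and those of $B$ cluster at $x_0$, so (being the union of two families each cofinitely contained in any given neighborhood of $x_0$) $\scra$ clusters at $x_0$. Since $x_0\in A$, we have $\bigcup\scra\cup\{x_0\}=A\cup B$. Now \Cref{components-are-Peano}, applied to $\scra$, yields that every path component of $A\cup B$ is a Peano continuum; this is condition (2) for $A\cup B$.

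For conditions (1) and (3) I would use the following observation: each member $P\in\scra$ is path connected, hence is contained in a unique path component of $A\cup B$, and conversely every path component $Q$ of $A\cup B$ is the union of the members of $\scra$ lying inside it — indeed, each point of $A\cup B$ belongs to some member of $\scra$, and any member of $\scra$ meeting $Q$ must be contained in $Q$. Sending each member of $\scra$ to the path component of $A\cup B$ containing it therefore defines a surjection from the countable set $\scra$ onto the set of path components of $A\cup B$, which gives condition (1). For condition (3), let $U$ be a neighborhood of $x_0$; since $\scra$ clusters at $x_0$, all but finitely many members of $\scra$ are contained in $U$, and any path component of $A\cup B$ not contained in $U$ must contain one of the remaining finitely many members of $\scra$; as distinct path components are disjoint, only finitely many path components of $A\cup B$ fail to lie in $U$. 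Hence $A\cup B\in\scrp$, and $\scrp$ is directed.

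I expect no genuinely hard step here: the only substantive content — that the path components of $A\cup B$ are Peano continua — has already been isolated as \Cref{components-are-Peano}, and the rest is bookkeeping about countability and clustering. The one point deserving a moment's care is the structural observation that each path component of $A\cup B$ is precisely the union of the members of $\scra$ it contains, since both the countability count and the clustering count rest on it.
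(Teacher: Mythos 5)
Your proof is correct and follows the same route as the paper: form the union $A\cup B$, invoke \Cref{components-are-Peano} for the Peano continuum condition, and observe that countability and clustering of the path components follow by bookkeeping (which the paper leaves as ``straightforward'' and you spell out). No issues.
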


\begin{proof}
We must show that every pair in $\scrp$ has an upper bound in $\scrp$. So let $A_1, A_2\in \scrp$ and set $A=A_1\cup A_2$. It is straightforward to see that $A$ has countably many path components, and that these path components cluster at $x_0$. Moreover, each path component of $A$ is a Peano continuum by \Cref{components-are-Peano}. Hence $A\in \scrp$.
\end{proof}

If $A\in\scrp$, then $A$ is closed in $X$ and, according to \Cref{closed}, $\Sigma A$ may be identified canonically as a subspace of $\Sigma X$. We implicitly make this identification in the following results.

\begin{lemma}\label{loc-free}
For each $A\in\scrp$, $\pi_1(\Sigma A, \x)$ is either free on finitely many generators or isomorphic to $\pi_1(\bbe_1, b_0)$.
\end{lemma}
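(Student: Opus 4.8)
The plan is to exhibit a very simple subspace of $\Sigma A$ that already carries the whole fundamental group. Write $P_0$ for the path component of $A$ containing $x_0$ and $P_1,P_2,\dots$ for the remaining path components (there may be finitely many, or none); each is a Peano continuum and, by hypothesis, the family $\{P_j\}$ clusters at $x_0$. Pick a point $p_j\in P_j$ for every $j\ge 1$, set $p_0=x_0$, and let $C_0=\{p_j\}$. Since $\{P_j\}$ clusters at $x_0$, so does $C_0$, so $C_0\in\scrc(A,x_0)$; being a countable subset of a Hausdorff space clustering at a point, $C_0$ is finite or homeomorphic to $\bbe_0$, hence $\Sigma C_0$ is a point, a finite wedge of circles, or $\bbe_1$, and by \Cref{closed} it embeds canonically in $\Sigma A$. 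I will prove that the inclusion $i\colon \Sigma C_0\hookrightarrow\Sigma A$ induces an isomorphism $i_{\#}\colon\pi_1(\Sigma C_0,\x)\to\pi_1(\Sigma A,\x)$. The Lemma is then immediate, since $\pi_1(\Sigma C_0,\x)$ is free on finitely many generators or isomorphic to $\pi_1(\bbe_1,b_0)$.

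Injectivity of $i_{\#}$ is easy. First, each $P_j$ with $j\ge 1$ is open in $A$: given $p\in P_j$, choose disjoint neighborhoods of $p$ and $x_0$ in $A$; the one about $x_0$ contains all but finitely many $P_k$, so the one about $p$ meets only finitely many components, and those (being compact and, apart from $P_j$, not containing $p$) may be separated from $p$, leaving a neighborhood of $p$ inside $P_j$. Hence the map $r\colon A\to C_0$ collapsing each $P_j$ onto $p_j$ is continuous, and $r\circ i=\mathrm{id}_{C_0}$. Applying $\Sigma(-)$ gives $\Sigma r\circ i=\mathrm{id}_{\Sigma C_0}$, so $i_{\#}$ is a split monomorphism.

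Surjectivity is the heart of the proof. Let $[\alpha]\in\pi_1(\Sigma A,\x)$. By \Cref{standardform} and \Cref{structureofbeta} we may assume $\alpha=\prod_{m\in M}\lambda_{y_m}^{\epsilon_m}$, where $M$ is a countable linear order indexing the components of $\ui\setminus\alpha^{-1}(\x)$, each $y_m$ lies in some $C\in\scrc(A,x_0)$ with $y_m\ne x_0$, and on the $m$-th component $\alpha$ is (a reparametrization of) the adjoint-path loop $\lambda_{y_m}^{\epsilon_m}$. Say $y_m\in P_{j(m)}$. For each $m$ I choose a path $\gamma_m$ in $P_{j(m)}$ from $p_{j(m)}$ to $y_m$: this uses that each $P_j$ is path connected, and when $j(m)=0$ it uses that the Peano continuum $P_0$ is sequentially $0$-connected at $x_0$, which lets me select the paths $\gamma_m$ with $y_m\in P_0$ so that their images cluster at $x_0$. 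Each $(s,t)\mapsto q(\gamma_m(s),t)$ is a path-homotopy from $\lambda_{p_{j(m)}}^{\epsilon_m}$ to $\lambda_{y_m}^{\epsilon_m}$ with image $\Sigma(\im(\gamma_m))$; splicing these over $M$ yields a path-homotopy in $\Sigma A$ from the loop $\prod_{m\in M}\lambda_{p_{j(m)}}^{\epsilon_m}$ — which has image in $\Sigma C_0$ — to $\alpha$. Thus $[\alpha]\in i_{\#}\pi_1(\Sigma C_0,\x)$, and $i_{\#}$ is onto.

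The step I expect to be the main obstacle is checking that this spliced map is continuous at the points of $\alpha^{-1}(\x)\times\ui$, which, as in the proof of \Cref{standardform}, reduces to showing that the family $\{\Sigma(\im(\gamma_m))\}_{m\in M}$ clusters at $\x$. For indices with $j(m)\ge 1$ this holds because $\im(\gamma_m)\subseteq P_{j(m)}$, the family $\{P_j\}_{j\ge 1}$ clusters at $x_0$, and any fixed $P_j$ occurs as $P_{j(m)}$ for only finitely many $m$ — otherwise the corresponding nontrivial factors would accumulate in $\ui$ while their images $\Sigma\{x_0,y_m\}$ stayed bounded away from $\x$, contradicting continuity of $\alpha$. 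For indices with $j(m)=0$ it is exactly the clustering arranged above via sequential $0$-connectedness of $P_0$. Granting this, the proof is complete.
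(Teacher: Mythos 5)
Your proof is correct and takes essentially the same route as the paper's: the same retraction of $A$ onto the transversal $C_0$ to get injectivity of $i_{\#}$, and the same path-sliding homotopy for surjectivity, using clustering of the components together with sequential $0$-connectedness of the Peano continuum $P_0$ to control the paths ending in the basepoint component. The only step you gloss is continuity of $r$ at points of $P_0$, which needs the (easy) observation that the preimage of a basic neighborhood of $p_0$ in $C_0$ is the complement in $A$ of finitely many compact, hence closed, components.
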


\begin{proof}
Let $\{A_i\}_{i\in \mci}$ denote the path components of $A$, where the index set $\mci\subseteq \bbn\cup\{0\}$ contains $0$, and $A_0$ is the path component of $x_0$. Let $a_0=x_0$ and for each $i\in \mci$ with $i\neq 0$, choose a point $a_i\in A_i$ and let $C=\{a_i\}_{i\in \mci}$. Let $r:A\to C$ be the retraction which maps each path component $A_i$ to $a_i$. Note that, when $i\neq 0$, $A_i$ is open in $A$ and therefore, $r$ is continuous on $A_i$. Next, consider a point $x\in A_0$. A basic open neighborhood of $r(x)=a_0$ in $C$ has the form $U=C\backslash \{a_i\mid i\in F\}$ for some finite set $F\subseteq \mci\backslash \{0\}$. Now $V=\bigcup\{A_i\mid i\in \mci\backslash F\}$ is an open neighborhood of $x$ such that $r(V)=U$. Thus $r$ is continuous at $x$. 

Let $\Sigma r:\Sigma A\to \Sigma C$ be the retraction induced by $r$. If $i_\#:\pi_1(\Sigma C,\x)\to \pi_1(\Sigma A,\x)$ is the homomorphism induced by inclusion, then $i_\#$ is injective since it has left inverse $\Sigma r_\#$. We now show that $i_\#$ is surjective. Let $[\alpha]\in \pi_1(\Sigma A,\x)$. By \Cref{standardform}, there exists $B\in\scrc(A,x_0)$ and a based loop $\beta:\ui\to \Sigma B$ that is path-homotopic to $\alpha$ in $\Sigma A$. Moreover, recall from \Cref{structureofbeta} that $\beta$ is constructed so that if $\mcj$ is the set of connected components of $\ui\backslash\beta^{-1}(\x)$, then for each $J\in \mcj$, $\beta|_{\ov{J}}\equiv \lambda_{x_J}^{\epsilon_{J}}$ for some $x_J\in B$ and $\epsilon_{J}\in\{\pm\}$. The continuity of $\beta$ ensures that $\{x_J\mid J\in\mcj\}$ clusters at $x_0$. For each $J\in\mcj$, we let $i_J$ denote the element of $\mci$ such that $x_J\in A_{i_J}$. Observe that, since $\{x_J\mid J\in\mcj\}$ clusters at $x_0$, the set $\{J\in\mcj\mid i_J=n\}$ can only be infinite if $n=0$. Now, for each $J\in\mcj$, we choose a path $\gamma_J:\ui\to A_{i_J}$ from $x_J$ to $a_{i_J}$ as follows. If $i_J\neq 0$, then the path $\gamma_{J}$ may be chosen arbitrarily. Note that the (possibly finite) set $\{x_J\mid i_J=0\}$ clusters at $x_0$. Since $A_0$ is a Peano continuum, it is sequentially $0$-connected at $x_0$. Therefore, we may choose the set $\{\gamma_J\mid i_J=0\}$ so that it clusters at $x_0$. With all paths $\gamma_J$, $J\in\mcj$ chosen in this way, it follows that the set $\{\gamma_J\mid J\in\mcj\}$ clusters at $x_0$.

For each $J\in\mcj$, let $H_J:\ov{J}\times \ui\to \Sigma A$ be the map defined by $H_J(s,t)=\lambda_{\gamma_J(t)}^{\epsilon_J}(s)$. Now, let $H:\ui^2\to \Sigma A$ be defined so that $H(\beta^{-1}(\x)\times \ui)=\x$ and $H(s,t)=H_J(s,t)$ when $s\in \ov{J}$. Then $H(s,0)=\beta(s)$ and $H(s,1)$ has image in $\Sigma C$. The continuity of $H$ is guaranteed since every neighborhood of $x_0$ in $X$ contains all but finitely many of the sets $\im(\gamma_J)$. It follows that any neighborhood of $x_0$ in $\Sigma X$ contains all but finitely many of the sets $\im(H_J)$. Then the homotopy class of the loop $H(s,1)$ is mapped to $[\alpha]$ by $i_\#$, showing that $i_\#$ is surjective.

Therefore, $i_\#$ is an isomorphism. We finish by observing that $\pi_1(\Sigma C)$ is free on finitely many generators when $C$ is finite and $\pi_1(\Sigma C)\cong \pi_1(\bbe_1)$ when $C$ is infinite.
\end{proof}

Let $\{\pi_1(\Sigma A,\x)\}_{A\in\scrp}$ be the direct system of groups indexed by $\scrp$ where the bonding maps $\phi_{AB}:\pi_1(\Sigma A, \x)\to \pi_1(\Sigma B,\x)$ for $A\leq B$ are induced by the inclusion $A\to B$. We may then consider the direct limit $\varinjlim_{A\in\scrp} \pi_1(\Sigma A, \x)$. For each $B\in \scrp$, let $\phi_B$ denote the map $\phi_B:\pi_1(\Sigma B, \x)\to \varinjlim_{A\in\scrp}\pi_1(\Sigma A,\x)$. Since for each $B\in\scrp$, the inclusion $B\to X$ induces a homomorphism $\psi_B:\pi_1(\Sigma B, \x)\to \pi_1(\Sigma X, \x)$, we obtain an induced homomorphism 
$$\psi:\varinjlim_{A\in\scrp} \pi_1(\Sigma A, \x)\to \pi_1(\Sigma X, \x).$$

\begin{lemma}\label{inj}
Let $A_1\in\scrp$, let $\alpha:I\to \Sigma A_1$ be a loop based at $\x$, and suppose that $\alpha$ is null-homotopic in $\Sigma X$. Then there exists $A_2\in\scrp$ such that $\alpha$ is null-homotopic in $\Sigma A_2$.
\end{lemma}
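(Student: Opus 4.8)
The plan is to extend $\alpha$ over the disk, to reduce the resulting map rel.\ its boundary by the pushdown/pushup technique of \Cref{simplelooplemma}, and to read off a member of $\scrp$ in which $\alpha$ already bounds. Regard $\alpha$ as a loop $(I,\partial I)\to(\Sigma A_1,\x)$; since $\alpha$ is null-homotopic in $\Sigma X$, fix a map $H\colon I^2\to\Sigma X$ restricting to $\alpha$ on $I\times\{0\}$ and to the constant map $\x$ on the other three edges, and put $Z=H^{-1}(\x)$, a closed subset of $I^2$ containing all of $\partial I^2$ except (part of) the bottom edge. Over $|K|:=I^2\setminus Z$ the map $H$ avoids $\x$, so it lifts uniquely along $q$ to $\hat H=(\hat H_1,\hat H_2)\colon |K|\to (X\setminus\{x_0\})\times(0,1)\subseteq X\times I$. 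As $|K|$ is an open subset of the manifold-with-corners $I^2$, I would fix a countable, locally finite triangulation $K$ of it that is compatible with $\partial I^2$ (i.e.\ $\partial I^2\cap|K|$ is a subcomplex) and fine enough that $\hat H$ carries each closed simplex $\sigma$ into $U^{\down}=X\times[0,2/3)$ or into $U^{\up}=X\times(1/3,1]$.

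Let $K_{\down}$, resp.\ $K_{\up}$, be the subcomplexes spanned by the simplices of the first, resp.\ second, kind, so $K=K_{\down}\cup K_{\up}$ and $K_{\cap}:=K_{\down}\cap K_{\up}$ consists of the simplices with $\hat H(\sigma)\subseteq X\times(1/3,2/3)$. Put $L=(\partial I^2\cap|K|)\cup K_{\cap}$. Following \Cref{push-up-down}, form the pushdown of $\hat H|_{K_{\down}}$ rel.\ $L\cap K_{\down}$ and the pushup of $\hat H|_{K_{\up}}$ rel.\ $L\cap K_{\up}$; both operations fix $\hat H$ on $K_{\cap}$ and on $\partial I^2\cap|K|$, hence agree on $K_{\down}\cap K_{\up}$ and glue to $\hat H'\colon|K|\to X\times I$; let $H'$ be $q\circ\hat H'$ on $|K|$ and $\x$ on $Z$. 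The continuity argument of \Cref{simplelooplemma} now applies almost verbatim: for any standard neighborhood $q(O(\scrv,\eta))$ the set $I^2\setminus H^{-1}(q(O(\scrv,\eta)))$ is compact, disjoint from $Z$, and so meets only finitely many simplices of $K$; and a simplex that lies in $H^{-1}(q(O(\scrv,\eta)))$ and maps into $q(U^{\down})$ in fact lies in $H^{-1}(q(O(\scrv,\eta)^{\down}))$, so its pushdown stays in $q(O(\scrv,\eta))$ (symmetrically for pushups). Thus $H'$ is continuous, the pushdown/pushup homotopies of \Cref{updownsimplices} (held constant on $Z$) assemble to a homotopy $H\simeq H'$ stationary on $\partial I^2$, and $H'$ is again a null-homotopy of $\alpha$.

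To bound the image, I would use the sharper form of \Cref{updownsimplices} coming from the condition on $r$ in \Cref{hep}: the pushdown sends each simplex $\sigma$ of $K_{\down}$ into $\{\x\}\cup\bigcup_{p\in\sigma\cap L}q\bigl(\{\hat H_1(p)\}\times[0,\hat H_2(p)]\bigr)$, and symmetrically for $K_{\up}$, so $\im(H')\subseteq q\bigl(\hat H_1(L)\times[0,1]\bigr)$. Now $\hat H_1(\partial I^2\cap|K|)$ is the set of $X$-coordinates of the points of $\im(\alpha)\setminus\{\x\}$, which lies in $A_1$ because $\im(\alpha)\subseteq\Sigma A_1$. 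For $K_{\cap}$: given a neighborhood $V$ of $x_0$, apply the finiteness remark above to the standard neighborhood with $V_{x_0}=V$ and $\eta\equiv 1/4$; all but finitely many simplices $\sigma$ of $K$ satisfy $\hat H(\sigma)\subseteq O(\scrv,\eta)$, and for $\sigma\in K_{\cap}$ this forces $\hat H_1(\sigma)\subseteq V$, since $\hat H_2(\sigma)\subseteq(1/3,2/3)$ misses $[0,1/4)\cup(3/4,1]$. Hence the countable family $\{\hat H_1(\sigma)\mid\sigma\in K_{\cap}\}$ of Peano continua clusters at $x_0$, so by \Cref{components-are-Peano} the set $B:=\hat H_1(K_{\cap})\cup\{x_0\}$ has countably many path components, each a Peano continuum, which (a short argument from the clustering shows) cluster at $x_0$; thus $B\in\scrp$. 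Taking $A_2:=A_1\cup B\in\scrp$ (by \Cref{directed}) we get $\hat H_1(L)\subseteq A_2$, hence $\im(H')\subseteq\Sigma A_2$, and $H'$ exhibits $\alpha$ as null-homotopic in $\Sigma A_2$.

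The heart of the matter — and the reason for pushing every interior simplex toward $\x$ while keeping fixed only $\partial I^2$ and the ``middle layer'' $K_{\cap}$ — is exactly the verification that $A_2\in\scrp$: a naive reduction records $X$-coordinates scattered across $X$, and it is only the observation that a point of $\Sigma X$ of height in $(1/3,2/3)$ lying near $\x$ must have $X$-coordinate near $x_0$ that makes the surviving coordinates cluster at $x_0$ (beyond those of $A_1$ and the finitely many coming from the compact core of $K$). The remaining ingredients — existence of the locally finite triangulation refining the given cover and compatible with $\partial I^2$, the two continuity checks, and the $\dh$/$\uh$ bookkeeping — are routine adaptations of what is already done for \Cref{simplelooplemma} and \Cref{standardform}.
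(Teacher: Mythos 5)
Your proposal is correct and follows essentially the same route as the paper's proof: extend $\alpha$ to a null-homotopy $H$, triangulate $I^2\setminus H^{-1}(\x)$ finely enough (and locally finitely) that each simplex maps into the lower or upper half of $X\times I$, push down/up rel.\ the interface and the bottom edge, and adjoin to $A_1$ the $X$-coordinates of the interface, which cluster at $x_0$ precisely because they sit at heights in $(1/3,2/3)$. The only cosmetic differences are that you hold fixed the entire subcomplex of simplices landing in the middle band rather than just the shared edges between down- and up-simplices, and that you assert rather than cite the existence of the required triangulation and subdivision (the paper invokes Ahlfors--Sario and Whitehead); the continuity and clustering arguments are the ones the paper uses.
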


\begin{proof}
Let $\alpha:I\to \Sigma A_1$ be a loop based at $\x$ and suppose $H:I^2\to \Sigma X$ is a null-homotopy of $\alpha$. Since any second countable surface (with boundary) admits a triangulation \cite[p. 107]{Ahlfors}, let $T:K\to I^2\setminus H^{-1}(\x)$ be a triangulation of $I^2\setminus H^{-1}(\x)$. To simplify the notation, we identify $K$ with $I^2\setminus H^{-1}(\x)$ under $T$ so that a simplex $\sigma$ of $K$ is equally regarded as the closed subset $T(\sigma)$ of $I^2\setminus H^{-1}(\x)$. Let $K_m$ denote the set of $m$-simplices in the simplicial complex $K$. Let $U_0, U_1$ be the open sets in $K$ defined by 
$$U_0=H^{-1}\Big(q\big((X\setminus x_0)\times (0, 2/3)\big)\Big),\quad U_1=H^{-1}\Big(q\big((X\setminus x_0)\times (1/3, 1)\big)\Big).$$
Since $U_0$ and $U_1$ cover $K$, by a theorem of J.H.C. Whitehead \cite[Theorem 35]{JHCWhitehead}, there exists a subdivision $K'$ of $K$ such that every simplex of $K'$ is contained in either $U_0$ or $U_1$. Thus, by replacing $K$ with $K'$, we may assume that every simplex of $K$ is contained in either $U_0$ or $U_1$. Let $\scrs_0$ be the collection of all $\sigma\in K_2$ such that $\sigma\subseteq U_0$, and let $\scrs_1=K_2\setminus \scrs_0$. Note that if $\sigma\in \scrs_1$, then $\sigma\subseteq U_1$. Let $\scrb$ denote the collection of all $\tau\in K_1$ such that there exists $\sigma\in \scrs_0$ and $\sigma'\in \scrs_1$ with $\tau=\sigma\cap\sigma'$, and let $B=\bigcup\scrb$. Since $H(B)\subseteq q((X\setminus x_0)\times (1/3, 2/3))$, let $B'$ be the image of $H(B)$ under the projection $p:q((X\setminus x_0)\times (1/3, 2/3))\to X\setminus x_0$. Finally, set $A_2=A_1\cup B'$. 

To show that $A_2\in\scrp$, it suffices to show that $B'\cup\{x_0\}\in \scrp$. First write $\scrb=\{\tau_i\}_{i\in \mci}$, and let $t_i =p(H(\tau_i))\subseteq X$ for each $i\in \mci$. Observe that $t_i$ is a Peano continuum for each $i\in \mci$ by the Hahn-Mazurkiewicz theorem. If $U$ is a neighborhood of $x_0$ in $X$, let 
$$V=q\Big((U\times[0,1])\cup (X\times[0, 1/3)\cup (2/3, 1])\Big).$$
Since $V$ is an open neighborhood of $\x$ in $\Sigma X$, $H^{-1}(V)$ is an open set in $I^2$ which contains $H^{-1}(\x)$. Then $C=I^2\setminus H^{-1}(V)$ is a compact subset of $I^2\setminus H^{-1}(\x)$. This implies that $C$ intersects $\tau_i$ for only finitely many $i\in \mci$ since $K$ is necessarily locally finite. Then $V$ contains $H(\tau_i)$ for all but finitely many $i\in \mci$. Due to the fact that $H(\tau_i)\subseteq q(X\times (1/3, 2/3))$ for all $i\in\mci$, it follows that $U$ contains $t_i$ for all but finitely many $i\in \mci$. Hence we may apply \Cref{components-are-Peano} to see that $B'\cup \{x_0\}=\bigcup_{i\in \mci}t_i\cup  \{x_0\}$ is in $\scrp$. Thus $A_2\in \scrp$.

We now construct a map $H':I^2\to \Sigma A_2$ such that $H'|_{\partial I^2}=H|_{\partial I^2}$, which will prove the lemma. Let $C_{0}$ be the union of all $\tau\in K_1$ such that there is exactly one $\sigma\in \scrs_0$ such that $\tau\subset \sigma$. Likewise, let $C_{1}$ be the union of all $\tau\in K_1$ such that there is exactly one $\sigma\in \scrs_1$ such that $\tau\subset \sigma$. Let $S_{0}=\bigcup \scrs_0$ and $S_{1}=\bigcup \scrs_1$. Then $(S_{0}, C_{0})$ and $(S_{1}, C_{1})$ are simplicial complex pairs. Let $f=H|_{S_{0}}$ and $g=H|_{S_{1}}$. Since $f$ and $g$ have image in $\Sigma X\setminus \x$, we may regard them as maps with codomain $X\times I$. Let $f_{C_{0}}^{\down}$ be the pushdown of $f$ rel. $C_0$ as described in \Cref{push-up-down}. Similarly, let $g_{C_{1}}^{\up}$ be the pushup of $g$ rel. $C_1$. We define a map $H':I^2\to \Sigma X$ by setting $H'|_{S_{0}}=q\circ f_{C_{0}}^{\down}$, $H'|_{S_{1}}=q\circ g_{C_{1}}^{\up}$, and $H'(H^{-1}(\x))=\x$. Note that $I\times\{0\}\subset C_0\cup C_1\cup H^{-1}(\x)$, hence $H'|_{I\times\{0\}}=H|_{I\times\{0\}}=\alpha$.

By the definition of $f^{\down}_{C_0}$ and $g^{\up}_{C_1}$, we have that $H'$ has image in $\dh(H(C_0))\cup \uh(H(C_1))$. Since $C_0\cup C_1\subseteq (I\times\{0\})\cup B$, we have that $H(C_0)\cup H(C_1)\subseteq \im(\alpha)\cup H(B)$. It follows that $H'$ has image in $\Sigma A_2=\Sigma (A_1\cup B')$. All that remains is to verify that $H'$ is continuous. Since $f$ and $g$ agree on $B=C_{0}\cap C_{1}=S_{0}\cap S_{1}$, so too do $f^{\down}_{C_{0}}$ and $g^{\up}_{C_1}$. Thus $H'$ is continuous at all points in $S_{0}\cup S_{1}$. Now suppose that $y\in I^2\setminus(S_{0}\cup S_{1})=H^{-1}(\x)$. Then $H'(y)=\x$, so let $U=q(O(\scrv, \eta))$ be a standard open neighborhood of $\x$ in $\Sigma X$, and let $U^\down = q(O(\scrv, \eta)^\down)$ and $U^\up = q(O(\scrv, \eta)^\up)$. Since $H^{-1}(U)$ is an open set in $I^2$ containing $H^{-1}(\x)$, we have that $H^{-1}(U)$ contains all but finitely many $\sigma\in K_2$. So let $F$ be the finite union of all $\sigma\in K_2$ such that $\sigma$ is not contained in $H^{-1}(U)$. Then $F$ is a closed set whose complement $V=I^2\setminus F$ is an open set in $I^2$ which contains $H^{-1}(\x)$ and satisfies $\ov V\subseteq H^{-1}(U)$. Recalling \Cref{updownsimplices}, we have that $f^{\down}_{C_0}(\sigma)\subset \dh(f(\sigma))$ for all $\sigma\in \scrs_0$, and $g^{\up}_{C_1}(\sigma)\subset \uh(g(\sigma))$ for all $\sigma\in \scrs_1$. Hence $H'(V)\subseteq \dh(H(\ov V\cap S_0))\cup \uh(H(\ov V\cap S_1))$. Since $H(\ov V\cap S_0)\subseteq q(X\times [0, 2/3))\cap U$, we have that $H(\ov V\cap S_0)\subseteq U^{\down}$. Consequently, $\dh(H(\ov V\cap S_0))\subseteq U^\down\subseteq U$. By a symmetric argument, $\uh(H(\ov V\cap S_1))\subseteq U^\up\subseteq U$. Thus, $V$ is an open neighborhood of $y$ such that $H'(V)\subset U$. Hence $H'$ is continuous.
\end{proof}

\begin{corollary}\label{adjoint-paths}
Two adjoint paths $\lambda_x$ and $\lambda_y$ are homotopic in $\Sigma X$ if and only if $x$ and $y$ lie in the same path component of $X$.
\end{corollary}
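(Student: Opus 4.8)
The plan is to prove the easy implication by an explicit homotopy and to reduce the hard implication, via \Cref{inj}, to the case of a finite wedge of circles or the earring $\bbe_1$, where distinct adjoint-path generators are visibly non-homotopic. For the backward direction, if $\gamma\colon I\to X$ is a path from $x$ to $y$, then $(s,t)\mapsto q(\gamma(t),s)$ is a homotopy rel $\partial I$ from $\lambda_x$ to $\lambda_y$ in $\Sigma X$, so $[\lambda_x]=[\lambda_y]$. Thus the content is the forward direction, so assume $\lambda_x\simeq\lambda_y$ in $\Sigma X$; we may further assume $x\neq y$, there being nothing to prove otherwise.

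First I would push the homotopy into a space to which the structure theory of \Cref{loc-free} applies. The loop $\ell=\lambda_x\cdot\lambda_y^{-}$, based at $\x$, is null-homotopic in $\Sigma X$. Since $X$ is Hausdorff, the finite set $A_1=\{x_0,x,y\}$ is discrete in the subspace topology, so each of its path components is a single point (a Peano continuum), and they trivially cluster at $x_0$; hence $A_1\in\scrp$ and $\ell$ is a loop in $\Sigma A_1$. By \Cref{inj} there is an $A_2\in\scrp$ with $A_1\subseteq A_2$ in which $\ell$ is null-homotopic, equivalently $\lambda_x\simeq\lambda_y$ in $\Sigma A_2$; note $x,y\in A_2$.

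Next I would detect the path components of $A_2$ using the component-collapsing retraction from the proof of \Cref{loc-free}. Suppose, toward a contradiction, that $x$ and $y$ lie in different path components of $X$, hence in different path components of $A_2$; note that then at most one of $x,y$ lies in the path component of $x_0$ in $A_2$, since that component is contained in the path component of $x_0$ in $X$. Choose a system of representatives of the path components of $A_2$ exactly as in \Cref{loc-free}, taking $x_0$ as the representative of its own component and taking $x$ (resp.\ $y$) as the representative of its component whenever that component does not contain $x_0$; call the resulting set $C$ and the collapsing map $r\colon A_2\to C$. As in \Cref{loc-free}, $r$ is continuous and $C\in\scrc$, so $\Sigma C$ is a point, a finite wedge of circles, or $\bbe_1$. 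The induced retraction $\Sigma r\colon\Sigma A_2\to\Sigma C$ sends each $\lambda_z$ to $\lambda_{r(z)}$, so it carries $\lambda_x\simeq\lambda_y$ to a homotopy $\lambda_{r(x)}\simeq\lambda_{r(y)}$ in $\Sigma C$, and by construction $r(x)\neq r(y)$ with at least one of them different from $x_0$; after swapping $x$ and $y$ if necessary, say $r(x)\neq x_0$. Collapsing all of $C$ except $r(x)$ yields a retraction $\Sigma C\to S^1$ carrying $[\lambda_{r(x)}]$ to a generator of $\bbz$ and $[\lambda_{r(y)}]$ to $0$ (here $\lambda_{r(y)}$ is either $\lambda_{x_0}$, which is constant, or one of the other adjoint-path generators, which is killed by this collapse). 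This contradicts $\lambda_{r(x)}\simeq\lambda_{r(y)}$; hence $x$ and $y$ lie in the same path component of $X$.

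The substantive ingredients are already in place: carrying the ambient homotopy in $\Sigma X$ down into $\Sigma A_2$ with $A_2\in\scrp$ is precisely \Cref{inj}, and the continuity of the component-collapsing retraction (in particular at points of the path component of $x_0$) together with $C\in\scrc$ is exactly what the proof of \Cref{loc-free} supplies. The only new point, and a routine one, is that distinct adjoint-path generators of a finite wedge of circles or of $\bbe_1$ are non-homotopic, for which the retraction $\bbe_1\to S^1$ onto a single circle suffices; I expect the only real care needed is the bookkeeping in the case where one of $x,y$ lies in the path component of $x_0$, where the corresponding adjoint path is constant.
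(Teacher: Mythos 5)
Your proof is correct and follows essentially the same route as the paper: the explicit homotopy $q(\gamma(t),s)$ for the easy direction, and for the converse an application of \Cref{inj} followed by the component-collapsing retraction $\Sigma r\colon\Sigma A\to\Sigma C$ from the proof of \Cref{loc-free}, concluding in the wedge of circles or $\bbe_1$. The only differences are cosmetic (arguing by contradiction, taking $x$ and $y$ themselves as component representatives, and spelling out the final retraction onto a single circle, which the paper leaves implicit).
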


\begin{proof}
If $\alpha:I\to X$ is a path from $x$ to $y$, then $H:I^2\to \Sigma X$, $H(s,t)=q(\alpha(t),s)$ is a homotopy rel. $\partial I$ from $\lambda_x$ to $\lambda_y$. Conversely,  suppose that $\lambda_x$ and $\lambda_y$ are homotopic. Then $\lambda_x\cdot\lambda_{y}^{-}$ is nullhomotopic in $\Sigma X$. \Cref{inj} implies that there exists $A\in\scrp$ such that $\lambda_x\cdot\lambda_{y}^{-}$ is nullhomotopic in $\Sigma A$. Let $\{A_i\}_{i\in \mci}$ denote the path components of $A$, with $A_0$ being the path component of $x_0$. Let $a_0=x_0$ and for each $i\in\mci$, choose a point $a_i\in A_i$. Let $C=\{a_i\}_{i\in\mci}$ and let $\Sigma r:\Sigma A\to \Sigma C$ be the retraction as in the proof of \Cref{loc-free}. We have that $\Sigma r\circ\lambda_x=\lambda_{a_i}$ and $\Sigma r\circ\lambda_y=\lambda_{a_j}$ for some choice of $i,j\in\mci$. But then
$$1=[\Sigma r\circ(\lambda_x\cdot\lambda_y^-)]=[\lambda_{a_i}][\lambda_{a_j}]^{-1}$$
in $\pi_1(\Sigma C,\x)$. However, $\Sigma C$ is homeomorphic to a finite wedge of circles or $\bbe_1$ and in such a space, we must have $i=j$. Hence $x$ and $y$ are both contained in $A_i=A_j$, a path-connected subset of $X$.
\end{proof}

Finally, we prove our main result.

\begin{proof}[Proof of \Cref{mainthm}]
By \Cref{standardform}, every element of $\pi_1(\Sigma X,\x)$ is of the form $i_\#([\alpha])$ for some $[\alpha]\in \pi_1(\Sigma C, \x)$ where $C\subseteq X$ is a countable set which clusters at $x_0$ and $i:\Sigma C\to \Sigma X$ is the inclusion. Since $C\in \scrc\subseteq\scrp$, we have that $\psi$ is surjective. For injectivity, let $a\in \varinjlim_{A\in\scrp} \pi_1(\Sigma A, \x)$ and suppose that $\psi(a)=1$. There exists $B\in \scrp$ and $[\alpha]\in\pi_1(\Sigma B, \x)$ such that $\phi_B([\alpha])=a$. The fact that $\psi(a)=1$ and $\psi(a)=\psi\circ \phi_B([\alpha])=\psi_B([\alpha])$ implies that $\alpha$ is null-homotopic in $\Sigma X$. \Cref{inj} implies that there exists $C\in\scrp$ such that $\alpha$ is null-homotopic in $\Sigma C$, that is, $\phi_{BC}([\alpha])=1$. Hence $a=\phi_B([\alpha])=\phi_{C}\circ \phi_{BC}([\alpha])=1$. This shows that $\psi$ is injective.
\end{proof}

\begin{theorem}\label{equiv}
For a Hausdorff space $X$, the following are equivalent:
\begin{enumerate}
\item $X$ is sequentially 0-connected at $x_0$,
\item $\Sigma X$ is sequentially $1$-connected at $\x$,
\item $\Sigma X$ is simply connected.
\end{enumerate}
\end{theorem}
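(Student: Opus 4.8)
The plan is to prove the cycle $(1)\Rightarrow(2)\Rightarrow(3)\Rightarrow(1)$. The implication $(1)\Rightarrow(2)$ is precisely \Cref{seq-1-connected}. For $(2)\Rightarrow(3)$, note first that $\Sigma X$ is always path connected, since each point $q(x,t)$ is joined to $\x$ by $s\mapsto q(x,(1-s)t)$; it remains only to see that $\pi_1(\Sigma X,\x)$ is trivial. Given any loop $f\colon S^1\to\Sigma X$ based at $\x$, the sequence $(f,c_{\x},c_{\x},\dots)$ converges toward $\x$, because only its first term fails to be the constant loop. By sequential $1$-connectedness it therefore admits a filling sequence $(g_n)$, and then $g_1$ is a null-homotopy of $f$. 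Hence $\pi_1(\Sigma X,\x)=1$ and $\Sigma X$ is simply connected.

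The heart of the theorem is $(3)\Rightarrow(1)$. Assume $\Sigma X$ is simply connected and let $\{x_k\}_{k\in\bbn}\to x_0$ be a convergent sequence in $X$; we must produce paths $\alpha_k$ from $x_0$ to $x_k$ that converge toward $x_0$. Set $C=\{x_0\}\cup\{x_k\mid k\in\bbn\}$, which lies in $\scrc\subseteq\scrp$. Since $x_k\to x_0$, the adjoint loops $\lambda_{x_k}$ cluster at $\x$, so the $\bbn$-indexed concatenation $\ell=\prod_{k\in\bbn}\lambda_{x_k}$ is a well-defined loop in $\Sigma C\subseteq\Sigma X$ based at $\x$. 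As $\Sigma X$ is simply connected, $\ell$ is null-homotopic in $\Sigma X$, so \Cref{inj} (applied with $A_1=C$) yields $A\in\scrp$ with $C\subseteq A$ in which $\ell$ is null-homotopic.

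Next I would invoke the retraction constructed in the proof of \Cref{loc-free}. Let $\{A_i\}_{i\in\mci}$ be the path components of $A$, with $A_0$ the one containing $x_0$; choose $a_0=x_0$ and $a_i\in A_i$ for $i\neq 0$, let $C'=\{a_i\}_{i\in\mci}$, and let $\Sigma r\colon\Sigma A\to\Sigma C'$ be the induced retraction, so that $\Sigma C'$ is a finite wedge of circles or a copy of $\bbe_1$ and $\Sigma r\circ\lambda_x=\lambda_{r(x)}$ for all $x\in A$. Writing $A_{i_k}$ for the component of $A$ containing $x_k$, we get $\Sigma r\circ\ell=\prod_k\lambda_{a_{i_k}}$, which is null-homotopic in $\Sigma C'$. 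Fix $i\in\mci$ with $i\neq 0$. Since $A_i$ is a Peano continuum, it is compact, hence closed in $X$, and it does not contain $x_0$; as $x_k\to x_0$ this forces $m_i:=\#\{k\mid x_k\in A_i\}$ to be finite. Composing the null-homotopy of $\Sigma r\circ\ell$ with the retraction of $\Sigma C'$ onto its $i$-th circle $S^1$, and noting that the surviving $\lambda_{a_i}$-factors all wind around $S^1$ in the same direction, we obtain a loop of winding number $m_i$ that is null-homotopic in $S^1$; hence $m_i=0$. This holds for every $i\neq 0$, and each $x_k$ lies in $A=\bigcup_{i\in\mci}A_i$, so in fact $x_k\in A_0$ for every $k$.

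Finally, $A_0$ is a Peano continuum, hence first countable and locally path connected, hence sequentially $0$-connected at $x_0$. Applying this to the sequence $\{x_k\}\to x_0$ (which converges in $A_0$) yields paths in $A_0$ from $x_0$ to $x_k$ that converge toward $x_0$ in $A_0$, and these a fortiori converge toward $x_0$ in $X$. This proves $(1)$. I expect $(3)\Rightarrow(1)$ to be the only real difficulty, and within it the need to control the convergence of the chosen paths rather than merely their existence: \Cref{adjoint-paths} alone shows that each $x_k$ lies in the path component of $x_0$ in $X$, but gives no uniformity whatsoever. The device that does the work is to compress the entire sequence into the single earring loop $\ell$ and push its null-homotopy into a member of $\scrp$ via \Cref{inj}, which then forces all the $x_k$ to lie in one Peano continuum.
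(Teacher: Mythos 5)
Your proof is correct and follows essentially the same route as the paper: the key step $(3)\Rightarrow(1)$ compresses the sequence into the concatenation $\prod_k\lambda_{x_k}$, pushes its null-homotopy into some $A\in\scrp$ (you via \Cref{inj} directly, the paper via \Cref{mainthm}, which rests on the same lemma), retracts onto $\Sigma C'$ to force every $x_k$ into the path component $A_0$ of $x_0$, and then uses that $A_0$ is a Peano continuum. Your added details --- the constant-sequence trick for $(2)\Rightarrow(3)$ and the winding-number justification that the retracted loop forces $r(x_k)=x_0$ --- only make explicit what the paper leaves implicit.
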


\begin{proof}
The implication $(1)\Rightarrow (2)$ is \Cref{seq-1-connected}, and $(2)\Rightarrow (3)$ follows by definition. We show that $(3)\Rightarrow (1)$ holds. So suppose that $\Sigma X$ is simply connected and $\{x_n\}_{n\in\bbn}\to x_0$ in $X$. We may assume that all terms in this sequence are distinct and not equal to $x_0$. Let $B=\{x_0\}\cup\{x_n\mid n\in\bbn\}$ and consider the $\bbn$-concatenation loop $\alpha=\prod_{n\in\bbn}\lambda_{x_n}$ in $\Sigma B$. By \Cref{mainthm} and the fact that $\Sigma X$ is simply connected, there exists $A\in\scrp$ with $B\subseteq A$ such that the inclusion $\iota: B\to  A$ gives $(\Sigma \iota)_{\#}([\alpha])=1$. Let $\scra=\{A_i\mid i\in\mci\}$ be the set of path components of $A$ where $A_0$ denotes the path component containing $x_0$. For each $i\in\mci$ pick a point $a_i\in A_i$, in particular, selecting $a_0=x_0$. Let $C=\{a_i\mid i\in \mci\}$ and note that $\Sigma C$ is homeomorphic to a point, a finite wedge of circles, or $\bbe_1$. Let $r:A\to C$ be the retraction mapping $A_i$ to $a_i$ and note that $x_n\in A_i$ if and only if $r(x_n)=a_i$. The induced homomorphism $(\Sigma r)_{\#}:\pi_1(\Sigma A,\x)\to\pi_1(\Sigma C,\x)$ gives $(\Sigma (r\circ \iota))_{\#}([\alpha])=1$. However, the loop $\Sigma (r\circ \iota)\circ \alpha= \prod_{n\in\bbn}\lambda_{r(x_n)}$ can only be null-homotopic in $\Sigma C$ if $r(x_n)=x_0$ for all $n\in\bbn$. Thus $x_n\in A_0$ for all $n\in\bbn$. Since $A_0$ is a Peano continuum, it is locally path connected and first countable, hence sequentially $0$-connected. Thus we may find a sequence of paths $\{\alpha_n\}_{n\in\bbn}$ in $A_0$ which clusters at $x_0$ and such that $\alpha_n(0)=x_0$ and $\alpha_n(1)=x_n$ for all $n\in\bbn$. We conclude that $X$ is sequentially $0$-connected at $x_0$.
\end{proof}

In \cite{Eda90}, K. Eda constructs a simply connected space $X$ which is locally simply connected at a point $x\in X$, yet not sequentially $1$-connected at $x$. The space constructed by Eda, as a set, is a quotient of an infinite number of copies of $C\bbe_1$, the unreduced cone over the infinite earring. The construction gives $X$ a topology which makes it not first countable at $x$. A similar construction, but instead using copies of the unreduced cone over the space $\bbe_0$, can be used to show that there exists a path connected space $X$ which is locally path connected at a point $x\in X$, yet not sequentially $0$-connected at $x$. Applying \Cref{equiv} to this space yields the following corollary.

\begin{corollary}
There exists a path-connected space $(X,x_0)$ which is locally path connected at its basepoint $x_0$ but such that $\Sigma X$ is not simply connected.
\end{corollary}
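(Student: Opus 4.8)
The plan is to produce the required space by adapting K.\ Eda's construction from \cite{Eda90} and then to invoke \Cref{equiv}. Recall that Eda builds a Hausdorff space $E$ with a distinguished point $x$ such that $E$ is simply connected and locally simply connected at $x$, yet is not sequentially $1$-connected at $x$; as a set, $E$ is obtained by gluing infinitely many copies of the unreduced cone $C\bbe_1$ along prescribed identifications, and --- crucially --- $E$ carries a topology that is \emph{not first countable} at $x$. Carrying out the analogous gluing with the cone $C\bbe_0$ (a converging fan of arcs, which is compact, metrizable, and contractible) in place of $C\bbe_1$ produces a based Hausdorff space $(X,x_0)$. Since replacing $\bbe_1$ by $\bbe_0$ lowers the relevant dimension by one, the conclusions drop accordingly: ``simply connected'' is replaced by ``path connected'' and ``locally simply connected at $x$'' by ``locally path connected at $x_0$''.

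Three facts must then be verified, along the lines of \cite{Eda90}. First, $X$ is Hausdorff (the Eda topology on the quotient of the disjoint union of the metrizable cones $C\bbe_0$ is Hausdorff, just as in Eda's argument). Second, $X$ is path connected and is locally path connected at $x_0$: each glued copy of $C\bbe_0$ is itself path connected, the gluing links these cones together through $x_0$, and --- by the design of the Eda topology --- every basic neighborhood of $x_0$ contains, eventually along the gluing, entire copies of $C\bbe_0$ that supply path-connections within that neighborhood for all of its points lying close to $x_0$; hence $x_0$ has a neighborhood basis of path-connected sets.

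Third, and this is the heart of the matter, $X$ is \emph{not} sequentially $0$-connected at $x_0$. One chooses a sequence $\{x_n\}_{n\in\bbn}\to x_0$ with $x_n$ a leaf of a glued cone $C_n$, situated at distance from $x_0$ bounded below within $C_n$. Given any family of paths $\alpha_n$ in $X$ from $x_0$ to $x_n$, the point is that $\alpha_n$ is forced to traverse a long arc inside $C_n$ in order to reach $x_n$; from these long arcs one assembles a single open neighborhood $U$ of $x_0$ avoided by infinitely many $\alpha_n$, so that $\{\alpha_n\}_{n\in\bbn}$ does not cluster at $x_0$. This assembly is a diagonal argument over the uncountable neighborhood basis of $x_0$, and it is precisely here that the non-first-countability of $X$ at $x_0$ is indispensable --- indeed, as already noted in the text, a space that is first countable and locally path connected at $x_0$ is automatically sequentially $0$-connected at $x_0$, so no first-countable example could work.

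Granting these three facts, the corollary is immediate: $X$ is a Hausdorff space that is path connected and locally path connected at $x_0$, so by \Cref{equiv} the failure of sequential $0$-connectedness at $x_0$ yields that $\Sigma X$ is not simply connected. The main obstacle is the third fact: parts one and two are routine once the construction is written down, but checking that \emph{no} family of connecting paths converges to $x_0$ requires setting up Eda's non-first-countable topology with enough room to diagonalize and then carefully analyzing the trace of an arbitrary path from $x_n$ to $x_0$ inside the glued cone $C_n$.
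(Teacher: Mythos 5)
Your proposal follows essentially the same route as the paper: both adapt Eda's non-first-countable one-point-union construction from \cite{Eda90}, replacing the cones $C\bbe_1$ by $C\bbe_0$ to obtain a space that is path connected and locally path connected at the basepoint yet not sequentially $0$-connected there, and then apply \Cref{equiv}. The paper leaves the verification at the same level of sketch as you do, so there is nothing further to reconcile.
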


\end{document}